\documentclass[11pt]{amsart}

\usepackage{amssymb}
\usepackage{mathtools}
\usepackage{tikz,ifthen}
\numberwithin{equation}{section}
\DeclarePairedDelimiter\abs{\lvert}{\rvert}%
\DeclarePairedDelimiter\norm{\lVert}{\rVert}%

% Swap the definition of \abs* and \norm*, so that \abs
% and \norm resizes the size of the brackets, and the 
% starred version does not.
\makeatletter
\let\oldabs\abs
\def\abs{\@ifstar{\oldabs}{\oldabs*}}
\let\oldnorm\norm
\def\norm{\@ifstar{\oldnorm}{\oldnorm*}}
\makeatother

\pagestyle{headings}

\theoremstyle{plain}
\newtheorem{thm}{Theorem}[section]
\newtheorem{theorem}{Theorem}
\newtheorem{lemma}[theorem]{Lemma}

\newtheorem{prop}[thm]{Proposition}
\newtheorem{cor}[thm]{Corollary}
\theoremstyle{definition}

\newtheorem{conj}{Conjecture}

\theoremstyle{remark}

\newcommand\R{\mathbb{R}}

\newcommand\N{\mathbb{N}}

\newcommand{\zn}[1]{\mathbb{Z}^{#1}}

\newcommand{\inn}[1]{\langle #1 \rangle}
\newcommand{\I}{J^\gamma_\lambda}
\newcommand{\IV}{\mathcal{J}^\gamma_\lambda}
\newcommand{\II}{J^\gamma_{\lambda,j}}
\newcommand{\IK}{I_{k,\lambda}}
\newcommand{\JK}{J_{k,\lambda}}
\newcommand{\JaK}{J^a_{\lambda}}

\newcommand{\sumab}[2]{\sum_{\substack{ {#1} \\ {#2} }}}

\newcommand{\A}{N^\gamma_{r}(P;h)}
\newcommand{\AN}{\mathcal{N}^\gamma_{r}(P;h)}
\newcommand{\AL}[1]{N^\gamma_{#1}(P;h)}
\newcommand{\AZ}[1]{N^\gamma_{#1}(P;0)}

\newcommand{\JaC}{\mathbf{J}^a_{s}(P)}

\newcommand{\JakC}{\mathbf{J}^{a_k}_{s}(P)}

\newcommand{\MV}{\mathbf{J}_{s,k}(P)}
\newcommand{\lse}{\ll}

\newcommand{\Z}{\mathbb{Z}}

\begin{document}
\title[Discrete fractional integrals]{On discrete fractional integral operators and related Diophantine equations}
%\subjclass[2000]{Primary 42B20; Secondary 11D45}
%\date{\today}
\author{Jongchon Kim}
\thanks{Supported in part by Study Abroad Scholarship Program by NIIED and NSF grant 1201314.} 
\address{Department of Mathematics, University of Wisconsin-Madison, Madison, WI 53706 USA}
\email{jkim@math.wisc.edu}
\begin{abstract}
We study discrete versions of fractional integral operators along curves and surfaces. $l^p \to l^q$ estimates are obtained from upper bounds of the number of solutions of associated Diophantine systems. In particular, this relates the discrete fractional integral along the curve $\gamma(m) = (m,m^2,\cdots,m^k)$ to Vinogradov's mean value theorem. Sharp $l^p \to l^q$ estimates of the discrete fractional integral along the hyperbolic paraboloid in $\Z^3$ are also obtained except for endpoints.
\end{abstract}
\maketitle
% First section
\section{Introduction} \label{sec:intro}
The classical Hardy-Littlewood-Sobolev inequality gives the $L^p(\R^d) \to L^q(\R^d)$ bounds for the fractional integral operator $f \to |\cdot|^{-d\lambda} * f$ for $0<\lambda <1$, $1<p<q<\infty$, and $\frac{1}{q} = \frac{1}{p} - (1-\lambda)$. The $l^p(\Z^d) \to l^q(\Z^d)$ bounds for its discrete analogue $f \to \sum_{m\in \Z^d\setminus{0}} |m|^{-d\lambda} f(\cdot - m)$ follows from the $L^p(\R^d) \to L^q(\R^d)$ bounds by a simple comparison argument \cite{SW}. However, if one considers discrete analogues of recent variants of fractional integrals, where the integration is taken over a sub-manifold, the argument fails and the problem becomes more interesting.

Let us give a few examples. Consider the operators $\IK$ and $\JK$ defined by
\begin{equation*}
\IK(f)(n) = \sum_{m=1}^\infty \frac{f(n-m^k)}{m^{\lambda}},
\end{equation*}
\begin{equation*}
\JK(f)(n_1,\cdots,n_k) = \sum_{m=1}^\infty \frac{f(n_1-m,n_2-m^2,\cdots,n_k-m^k)}{m^{\lambda}}.
\end{equation*}
The study of $l^p \to l^q$ estimates of the operator $\IK$ and $\JK$ was initiated by Stein and Wainger \cite{SW}, where they obtained almost sharp bounds for  $k=2$ and $1/2< \lambda<1$ by employing the circle method. Oberlin \cite{O} obtained sharp results for $k=2$ and $0<\lambda<1$ except for endpoints without using the circle method. The endpoint bounds were fully established in a series of papers by Stein and Wainger \cite{SW, SW2}, and Ionescu and Wainger \cite{IW}, but only in the case $k=2$. The case $k\geq3$ seems to be substantially harder. Pierce \cite{P2} studied the operator $\IK$ for $k\geq3$ using tools from number theory, but it is open for $k\geq 3$ in the full $p,q$ range. See \cite{P1} for a generalization of $J_{2,\lambda}$ to quadratic forms in $\Z^d$.

In this paper, we shall consider the operator $\I$ acting on (initially) compactly supported functions $f:\zn{d} \to \mathbb{C}$ by
\begin{equation*}
\I(f)(n) =  \sum_{m=1}^\infty \frac{f(n- \gamma(m))}{m^{\lambda}},
\end{equation*}
where $\gamma : \mathbb{N} \to \zn{d}$ is an injection. Note that $\IK$ and $\JK$ are special cases of $\I$. 

It is known \cite[Proposition (b)]{SW} that the operator $\I$ extends to a bounded operator from $l^p(\zn{d})$ to $l^q(\zn{d})$ if $\frac{1}{q} \leq \frac{1}{p} - (1-\lambda)$ and $1<p<q<\infty$ for any injection $\gamma$. This result is sharp if $\gamma(m)=m$, but not in general. We are interested in obtaining a sharper estimate which is sensitive to $\gamma$.

The previous work described above reveals a close connection between $l^p \to l^q$ bounds and the number of solutions of Diophantine equations related to the curves at least in two different ways. One approach \cite{SW,SW2,P2} is via the Fourier multipliers associated with the operators and Parseval's identity. Another approach \cite{O} is via the combinatorial argument by Christ \cite{C} which originated from the study of an averaging operator along a curve.

The Diophantine equation considered in \cite{P2} for the case $\gamma(m)=m^k$ was 
\begin{equation} \label{eqn:Weyl}
x_1^{k} + \cdots + x_s^{k} = x_{s+1}^{k} + \cdots + x_{2s}^{k}
\end{equation}
for $x_i  \in [1,P]\cap \mathbb{Z}$, $s,P \in \N$. The number of solutions of (\ref{eqn:Weyl}) is known as the mean values of Weyl sums, and has applications to Waring's problem. 

Following the combinatorial approach \cite{O}, for general $\gamma$, we shall relate $l^p \to l^q$ bounds of $\I$ to the Diophantine system 
\begin{equation*} 
\gamma(x_1) + \cdots + \gamma(x_s) = \gamma(x_{s+1}) + \cdots + \gamma(x_{2s}).
\end{equation*}
Moreover, we shall show that it is desirable to obtain estimates on the number of solutions of  Diophantine systems with odd-number of unknowns, which turns out to extend the allowable $\lambda$ range. See Section \ref{sec:Waring} and \ref{sec:hyp}.

Let us introduce a notation. For fixed $h \in \zn{d}$, $r,P \in \mathbb{N}$, let $\A$ be the number of $r$-tuples $(x_1,\cdots,x_r)$ of positive integers $x_i \leq P$ satisfying the equation
\begin{equation} \label{eqn:Deq}
\sum_{i=1}^r (-1)^{i+1} \gamma(x_i) = h.
\end{equation}
Note that there is a trivial estimate $\A \ll P^r$, where $\ll$ denotes the Vinogradov's notation. The following theorem provides $l^p \to l^q$ estimates for $\I$ given a non-trivial estimate on $\A$. In what follows, we allow implied constants to depend on $\gamma$, $r$, and $\epsilon$. 
Throughout the paper, we assume $1\leq p,q \leq \infty$. 
\begin{thm} \label{thm:var}
Suppose that for fixed $r\in\mathbb{N}$ and $\delta>0$, we have $\A \lse P^{r-\delta+\epsilon}$ for each $\epsilon>0$ uniformly in $h \in \zn{d}$. Let $s \in\mathbb{N}$ be the number such that we have either $r=2s$ or $r=2s-1$. Then $\I$ extends to a bounded operator from $l^p(\zn{d})$ to $l^q(\zn{d})$ if $1-\frac{\delta}{r}<\lambda < 1$ and $p,q$ satisfy 
\begin{itemize}
\item[(i)] $\frac{1}{q} < \frac{1}{p} - \frac{1-\lambda}{\delta}$ and
\item[(ii)] $\frac{1}{p}> \frac{s}{\delta}(1-\lambda)$, $\frac{1}{q} < 1 - \frac{s}{\delta}(1-\lambda)$.
\end{itemize}
\end{thm}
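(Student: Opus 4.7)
The plan is to dyadically decompose $\I=\sum_{j\ge 0}\II$, where $\II f(n)=\sum_{2^j\le m<2^{j+1}}m^{-\lambda}f(n-\gamma(m))$, and to show that for each $(p,q)$ in the region cut out by (i) and (ii) the restricted weak-type norm satisfies $\|\II\|_{l^p\to l^q}\ll 2^{j\sigma}$ for some $\sigma=\sigma(p,q,\lambda,\delta)<0$, so the geometric sum in $j$ converges. Each $\II$ is convolution by a measure $\mu_j$ on $\zn d$ supported on $\gamma(\{m\sim 2^j\})$ with $\|\mu_j\|_\infty\asymp 2^{-j\lambda}$ and $\|\mu_j\|_1\asymp 2^{j(1-\lambda)}$, giving the trivial bounds $\|\II\|_{l^1\to l^\infty}\ll 2^{-j\lambda}$ and $\|\II\|_{l^p\to l^p}\ll 2^{j(1-\lambda)}$ for all $p$.

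The main non-trivial input is the $r$-fold alternating convolution $\mu_j*\tilde\mu_j*\mu_j*\tilde\mu_j*\cdots$ ($r$ factors, $\tilde\mu_j(h):=\mu_j(-h)$), whose value at $h$ is $\sum\prod m_i^{-\lambda}$ over solutions of (1.2), hence by hypothesis has $\|\cdot\|_\infty\ll 2^{-jr\lambda}\max_h\A\ll 2^{j(r(1-\lambda)-\delta+\epsilon)}$. For even $r=2s$ this is exactly the kernel of $(\II\II^*)^s$, so Jensen's inequality applied to the spectral measure of the self-adjoint $\II\II^*$ acting on $\mathbf 1_F$ gives
\[
\|\II^*\mathbf 1_F\|_2^{2s}=\langle\II\II^*\mathbf 1_F,\mathbf 1_F\rangle^s\le|F|^{s-1}\langle(\II\II^*)^s\mathbf 1_F,\mathbf 1_F\rangle\ll 2^{j(2s(1-\lambda)-\delta+\epsilon)}|F|^{s+1},
\]
and Cauchy-Schwarz $\langle\II\mathbf 1_E,\mathbf 1_F\rangle\le|E|^{1/2}\|\II^*\mathbf 1_F\|_2$ yields a restricted weak-type bound at $(2,2s/(s-1))$ with norm $\ll 2^{j((1-\lambda)-\delta/(2s)+\epsilon)}$. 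The same argument applied to $\II^*\II$ gives the dual extreme $(2s/(s+1),2)$. I would then interpolate each of these two Diophantine extreme points against the trivial $l^p\to l^p$ bounds at $p\in\{1,\infty\}$: condition (ii) is exactly the condition that $(1/p,1/q)$ lies on one of the interpolation segments joining a Diophantine extreme to a trivial endpoint, and (i) is precisely the condition that the interpolated exponent $\sigma$ is strictly negative. Summation in $j$ followed by Marcinkiewicz interpolation in the open region then gives the strong-type bound.

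The main obstacle is the odd case $r=2s-1$, where the $r$-fold convolution is the kernel of $V_j:=\II(\II^*\II)^{s-1}=(\II\II^*)^{s-1}\II$. This operator is not self-adjoint, so the spectral Jensen trick is unavailable, and its self-adjoint squares $V_jV_j^*=(\II\II^*)^{2s-1}$ and $V_j^*V_j=(\II^*\II)^{2s-1}$ encode Diophantine systems with $2(2s-1)$ unknowns, not covered by the hypothesis. Correspondingly, the theorem's extreme region for odd $r$ collapses to the single self-dual tip $(p_0,q_0)=((2s-1)/s,(2s-1)/(s-1))$ with $1/p_0+1/q_0=1$. My plan is to first inter
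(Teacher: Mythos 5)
Your spectral argument for even $r=2s$ is a valid alternative to what the paper does: the kernel of $(\II\II^*)^s$ is indeed the $2s$-fold alternating convolution, the Jensen step $\langle T\chi_F,\chi_F\rangle^s\le|F|^{s-1}\langle T^s\chi_F,\chi_F\rangle$ for $T=\II\II^*\ge 0$ is correct, and together with Cauchy--Schwarz it yields the restricted weak-type $(2s/(s+1),2)$ bound with the summable $j$-exponent. This is essentially the Fourier/multiplier route that the paper records in Appendix~\ref{sec:alt} (Proposition~\ref{prop:weyl} and Corollary~\ref{cor:Pi}), recast in terms of convolution kernels and the spectral measure rather than Parseval.

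The gap is the odd case, and you have located it correctly but not filled it: your proposal breaks off exactly there. The point of the theorem is precisely to exploit an odd-variable Diophantine bound so as to land at the self-dual tip $(\tfrac{2s-1}{s},\tfrac{2s-1}{s-1})$, and no $T^*T$/spectral argument can manufacture this from $r=2s-1$ unknowns, since any self-adjoint square introduces $2(2s-1)$ unknowns. The paper's mechanism is Christ's refinement lemma (Lemma~\ref{lem:chr}, extracted from \cite{C}): starting from a superlevel set $F$ of $\II\chi_E$, one builds nested sets $E=E_0\supset E_1\supset\cdots$ and $F=F_0\supset F_1\supset\cdots$ such that every point of $F_k$ has $\gtrsim 2^{\lambda j}\alpha$ preimages under $\gamma$ landing in $E_k$ and every point of $E_k$ has $\gtrsim 2^{\lambda j}\beta$ images landing in $F_{k-1}$. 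Fixing $n\in F_{s-1}$ and chaining these bounds produces a lower bound $S_r\gtrsim 2^{r\lambda j}\alpha^r(|F|/|E|)^{s-1}$ for the $r$-fold alternating sum $S_r$, while counting $S_r$ against $\A$ gives the upper bound $S_r\lesssim 2^{(r-\delta+\epsilon)j}|E|$; comparing yields $\alpha^r|F|^{s-1}\lesssim 2^{-\epsilon j}|E|^s$, which is exactly the restricted weak-type estimate at the tip with geometric decay in $j$. This combinatorial count is agnostic to the parity of $r$, which is why it, and not the $TT^*$ method, is the engine of the proof.

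Two smaller issues. First, you should bound the convolution kernel of $(\II\II^*)^s$ by $2^{-2js\lambda}\cdot\max_h N^\gamma_{2s}(2^{j+1};h)$ rather than plugging in $P=2^j$; this is cosmetic. Second, and more substantively, real interpolation of the single restricted weak-type point against $l^1\to l^\infty$ (and the diagonal) produces only a one-parameter family of exponent pairs, while conditions (i)--(ii) describe an open two-dimensional region whose upper boundary (the line $1/q=1-\tfrac{s}{\delta}(1-\lambda)$) lies strictly above $1/q=1/2$ when $\lambda$ is close to $1$; to fill in the region the paper first real-interpolates for each $\lambda$, then uses Stein analytic interpolation in the complex parameter $\lambda$ against the trivial bound for $\Re(\lambda)>1$, and finally invokes the inclusion $l^p\subset l^q$. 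You should spell out this last stage rather than asserting that (i)--(ii) coincide with your interpolation segments, which they do not in general.
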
 

Several remarks are in order.
\begin{enumerate}
\item When $r=2s$, it is enough to assume that $\AZ{2s} \lse P^{2s-\delta+\epsilon}$ since $\AL{2s} \leq \AZ{2s}$ for all $h \in \zn{d}$. This can be seen by writing
\begin{equation*}
\AL{2s} = \int_{[0,1]^d} |S^\gamma( \alpha )|^{2s} e(-h\cdot \alpha) d \alpha, 
\end{equation*}
where $e(t)\equiv e^{2\pi i t}$ and $S^\gamma(\alpha) = \sum_{m=1}^{P} e(\gamma(m)\cdot \alpha)$.

\item Given the stronger estimate $\A \lse P^{r-\delta}$, it is possible to replace $<$ in (i) by $\leq$ with a slight modification of the proof of Theorem \ref{thm:var}. This can be done by applying an abstract analogue (\cite[Section 6.2]{CSWW}) of an interpolation argument of Bourgain \cite{B}, but we shall not pursue it here.

\nocite{IW} 
\item If one considers operators with summation over $m\in\Z\setminus 0$, then the condition $x_i \in [1,P]\cap \Z$ in (\ref{eqn:Deq}) changes to $x_i \in [-P,P]\cap \Z$. See Section \ref{sec:hyp} for an analogous statement in higher dimensions.
\end{enumerate}

Before we turn to the proof of Theorem \ref{thm:var}, we shall give applications for the case $\gamma^a(m)=(m^{a_1},m^{a_2}, \cdots,m^{a_d})$ by using Vinogradov's mean value theorem in Section \ref{sec:jak}. We prove Theorem \ref{thm:var} and a sharp (up to endpoints) $l^p \to l^q$ bound for the discrete fractional integral along the hyperbolic paraboloid in $\zn{3}$ in Section \ref{sec:pf} and Section \ref{sec:hyp}, respectively. In Appendix \ref{sec:alt}, we generalize the Fourier multiplier approach \cite{P2} for operators $\JaK$ considered in Section \ref{sec:jak}, giving an alternative proof of Theorem \ref{thm:JaK}.

\section{The operator $\JaK$} \label{sec:jak}
We study the operator $\JaK \equiv J^{\gamma^a}_\lambda$, where $\gamma^a(m) = (m^{a_1}, \cdots, m^{a_d})$ for a d-tuple of strictly increasing natural numbers $a=(a_1,\cdots,a_d)$.
We define $\norm{a} = a_1 + \cdots + a_d$.
\begin{conj} \label{conj:JK}
Let $0<\lambda< 1$. $\JaK$ extends to a bounded operator from $l^p(\zn{d})$ to $l^q(\zn{d})$ if and only if $p$ and $q$ satisfy
\begin{itemize}
\item[(i)] $\frac{1}{q}  \leq  \frac{1}{p} - \frac{1}{\norm{a}}(1-\lambda)$ and
\item[(ii)] $\frac{1}{p} > 1-\lambda, \frac{1}{q}  <  \lambda$.
\end{itemize}
\end{conj}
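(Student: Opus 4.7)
My plan is to combine Theorem \ref{thm:var} with a Vinogradov-type mean value estimate to establish the sufficient direction, and to rule out the complementary region by direct testing. For sufficiency, apply Theorem \ref{thm:var} to $\gamma = \gamma^a$; the associated Diophantine system reads
\[
\sum_{i=1}^r (-1)^{i+1} x_i^{a_j} = h_j, \qquad j = 1, \dots, d.
\]
Taking $r = 2s$ and aiming for $\delta = \norm{a}$, one needs $\AZ{2s} \lse P^{2s - \norm{a} + \epsilon}$. In the classical case $a = (1,2,\dots,k)$ this is exactly Vinogradov's mean value theorem, now known in the optimal form (via Wooley and Bourgain--Demeter--Guth) for all $s \geq \norm{a} = k(k+1)/2$; for general strictly increasing $a$ it is the natural analogue. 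Setting $s = \norm{a}$ forces $s/\delta = 1$, so Theorem \ref{thm:var} delivers boundedness on the open range $\frac{1}{q} < \frac{1}{p} - \frac{1-\lambda}{\norm{a}}$, $\frac{1}{p} > 1 - \lambda$, $\frac{1}{q} < \lambda$. The scaling endpoint $\frac{1}{q} = \frac{1}{p} - \frac{1-\lambda}{\norm{a}}$ in (i) would be reached via Remark (2) following Theorem \ref{thm:var}, provided one has the $\epsilon$-free bound $\AZ{2\norm{a}} \lse P^{\norm{a}}$ to feed into the Bourgain interpolation argument recalled there.

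For necessity, the conditions in (ii) come from testing against point masses. Since $\JaK \delta_0$ is supported on the image of $\gamma^a$ with values $m^{-\lambda}$, one has $\norm{\JaK \delta_0}_{l^q}^q = \sum_{m=1}^\infty m^{-q\lambda}$, forcing $\frac{1}{q} < \lambda$; an analogous test of the adjoint on $\delta_0$ yields $\frac{1}{p} > 1 - \lambda$. The scaling condition (i) is ruled out by $f = \chi_B$ with $B = \prod_{j=1}^d [0, N^{a_j}]$, so that $\norm{f}_{l^p} \asymp N^{\norm{a}/p}$. For a positive-density set of $n \in B$ and every $m$ in an interval of length $\asymp N$, one has $n - \gamma^a(m) \in B$, whence $\JaK f(n) \gse \sum_{m \asymp N} m^{-\lambda} \gse N^{1-\lambda}$ on a set of size $\gse N^{\norm{a}}$; comparing $\norm{\JaK f}_{l^q} \gse N^{1 - \lambda + \norm{a}/q}$ with $\norm{f}_{l^p}$ forces $\frac{1}{q} \leq \frac{1}{p} - \frac{1-\lambda}{\norm{a}}$ upon letting $N \to \infty$.

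The main obstacle is the Vinogradov-type bound for general $a$: while the case $a=(1,2,\dots,k)$ is now completely settled, the optimal bound for the subsystem indexed by an arbitrary increasing tuple $(a_1,\dots,a_d)$ does not follow from Vinogradov's MVT (the subsystem has strictly fewer equations and potentially many more solutions), and appears to require a decoupling or efficient-congruencing argument tailored to $\gamma^a$. A secondary obstacle is the endpoint inclusion in (i), which demands the sharper $\epsilon$-free mean value estimate $\AZ{2\norm{a}} \lse P^{\norm{a}}$; this strengthening is not part of the standard Vinogradov package and would likely require a separate argument.
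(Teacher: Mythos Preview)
The statement in question is a \emph{conjecture}; the paper does not prove the sufficiency direction, only the necessity (Appendix~\ref{sec:app3}). Your proposal is honest about this: you correctly identify that sufficiency would follow from Theorem~\ref{thm:var} together with the optimal Vinogradov-type bound $\JaC \lse P^{2s-\norm{a}+\epsilon}$ at $s=\norm{a}$, and you flag both genuine obstacles---that this bound is unproven for general increasing tuples $a$, and that the endpoint in (i) would further require an $\epsilon$-free version. One point you omit: Theorem~\ref{thm:var} with $r=2s=2\norm{a}$ and $\delta=\norm{a}$ carries the restriction $\lambda>1-\delta/r=1/2$, so even granting the Vinogradov input you would only cover $1/2<\lambda<1$ directly; the paper notes (just after the conjecture, and again at \eqref{eqn:strongestimate}) that the full range $0<\lambda<1$ requires either interpolation with the trivial $l^1\to l^\infty$ bound or the stronger odd-variable estimate $N^{\gamma^a}_{2\norm{a}-1}(P;h)\ll P^{\norm{a}-1+\epsilon}$.

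For necessity, your argument for (ii) via the point mass $\delta_0$ is exactly the paper's. For (i) you take a different route: you test against $\chi_B$ with $B=\prod_j[0,N^{a_j}]$ and compare norms as $N\to\infty$, whereas the paper tests against the power function $f(n)=\prod_j|n_j|^{-\alpha}$ and pushes $\alpha\downarrow 1/p$. Your box test is correct and arguably more elementary (no limiting in an auxiliary parameter), while the paper's power-function test is the more traditional scaling argument; both yield the same condition $\frac{1}{q}\le\frac{1}{p}-\frac{1-\lambda}{\norm{a}}$.
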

See Appendix \ref{sec:app3} for the necessity of conditions (i) and (ii). One is mainly interested in proving Conjecture \ref{conj:JK} for the range of $\frac{\norm{a}-1}{2 \norm{a}-1} < \lambda <1$, since then one may get the full result by interpolating the result with the trivial $l^1(\zn{d}) \to l^\infty(\zn{d})$ bound for $\Re(\lambda) \geq 0$.

In view of Theorem \ref{thm:var}, the operator $\JaK$ is related to the quantity $N^{\gamma^a}_{r}(P;h)$. For even $r=2s$, let us denote $N^{\gamma^a}_{2s}(P;0)$ by $\JaC$, i.e. the number of solutions of the Diophantine system
\begin{equation*} 
x_1^{a_j} + \cdots + x_s^{a_j} = x_{s+1}^{a_j} + \cdots + x_{2s}^{a_j}
\end{equation*}
for $1\leq j \leq d$ with $x_i  \in [1,P]\cap \mathbb{Z}$. By a standard argument (see \cite{KF} or \cite{V}), one may show that there is a lower bound 
\begin{equation} \label{eqn:lower}
P^s+P^{2s-\norm{a}} \ll \JaC.
\end{equation}

It is natural to ask if $P^s+P^{2s-\norm{a}}$ is the true order of $\JaC$. This question for the case $a=a_k\equiv (1,2,\cdots,k)$ has been of great interest. Non-trivial upper bounds for $\MV\equiv \JakC$ are collectively known as Vinogradov's mean value theorem. Recent work by Wooley \cite{W,W2} and Ford and Wooley \cite{FW} report the following substantial progress on Vinogradov's mean value theorem.
\begin{theorem} \label{thm:wooley}
Suppose that $s$ and $k\geq 3$ are natural numbers and that $1\leq s \leq \frac{(k+1)^2}{4}$ or $s\geq k^2-1$. Then for each $\epsilon>0$, one has
\begin{equation} \label{eqn:wooley}
\MV \lse P^\epsilon(P^s + P^{2s-\frac{k(k+1)}{2}}).
\end{equation}
\end{theorem}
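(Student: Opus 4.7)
The plan is to follow the \emph{efficient congruencing} method introduced by Wooley. Write $f_k(\alpha;P) = \sum_{1\leq x \leq P} e(\alpha_1 x+\cdots+\alpha_k x^k)$, so by orthogonality $\MV = \int_{[0,1]^k} |f_k(\alpha;P)|^{2s}\,d\alpha$. The diagonal contribution gives the lower bound $P^s \lse \MV$ while the probabilistic heuristic predicts the off-diagonal term $P^{2s-k(k+1)/2}$; together these suggest the conjectural (\ref{eqn:wooley}) for all $s$, and the theorem establishes it outside an intermediate range. The two stated ranges are handled by rather different devices: for $s \leq (k+1)^2/4$ one exploits efficient congruencing directly, while for $s \geq k^2-1$ one combines efficient congruencing with classical high-moment tools such as Hua's lemma or Weyl differencing.

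For the low-moment range, I would introduce auxiliary mean values $K_{s,k}^{(a,b)}(P,M)$ counting solutions of (\ref{eqn:Weyl}) weighted by indicator functions that force certain variables to share common residues modulo $p^a$, with the remaining variables prescribed modulo $p^b$, for a prime $p$ of size $\approx M$ with $M \approx P^{1/k}$. The central combinatorial input is the \emph{translation invariance} of the Vinogradov system: substituting $x_i \mapsto x_i+t$ preserves solutions. This translation invariance together with Hensel-style lifting lets one pass from a conditioning at level $p^a$ to a conditioning at level $p^b$, yielding a recursive inequality of the schematic form
\begin{equation*}
K_{s,k}^{(a,b)}(P,M) \lse M^{\kappa_1} \bigl(K_{s,k}^{(b,\phi(b))}(P,M)\bigr)^{\theta}\bigl(\MV\bigr)^{1-\theta}
\end{equation*}
for appropriate parameters $\kappa_1,\theta,\phi(b)$ depending on $s,k$. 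Iterating this relation a bounded number of times, inserting a base case via a trivial bound, and optimizing the parameter choices produces the target inequality (\ref{eqn:wooley}), with the restriction $s \leq (k+1)^2/4$ appearing as the region in which the iteration actually yields a contracting map on the relevant exponents.

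For the range $s \geq k^2-1$, I would instead leverage the already established non-trivial bound for $s_0 \approx k^2/2$ from the first part, then use the H\"older-type inequality $\MV \leq (J_{s_0,k}(P))^{\alpha}(J_{t,k}(P))^{1-\alpha}$ together with classical bounds on $J_{t,k}(P)$ for very large $t$ (where Hua's lemma and the divisor bound give essentially sharp estimates). Combining these inputs with the efficient congruencing estimate and optimizing over the decomposition produces (\ref{eqn:wooley}) in the high-moment range.

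The main obstacle is organizing the recursion so that the $P^{\epsilon}$ losses do not accumulate and the exponents converge to the sharp values after finitely many iterations. Concretely, one must track a multi-parameter family of conditioned mean values and show that a certain explicit matrix of exponents contracts; this is where the range $s \leq (k+1)^2/4$ is forced, and extending the method into the intermediate region $(k+1)^2/4 < s < k^2-1$ is precisely the difficulty that Theorem \ref{thm:wooley} does not overcome. A secondary nuisance is establishing uniformity of the implicit constants in $P$ while allowing the auxiliary prime $p \approx P^{1/k}$ to vary with $P$.
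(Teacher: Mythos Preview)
The paper does not prove Theorem~\ref{thm:wooley}; it is quoted as a result of Wooley \cite{W,W2} and Ford--Wooley \cite{FW}, and is used as a black box. So there is no ``paper's own proof'' to compare against, and your proposal is really an attempt to sketch the content of those cited works.

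As a sketch of \cite{W,W2,FW}, your outline for the range $s\le (k+1)^2/4$ is in the right spirit: efficient congruencing does proceed via auxiliary conditioned mean values, translation invariance, and a recursive inequality whose iteration contracts precisely in that range. But as written it is far too schematic to be a proof; the actual choice of the conditioning parameters, the precise form of the recursion, and the verification that the exponent map contracts occupy dozens of pages in \cite{FW}, and none of that is present here.

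Your treatment of the range $s\ge k^2-1$, however, has a genuine gap. You propose to interpolate between the small-moment bound at some $s_0\approx k^2/4$ and a classical bound for very large $t$. This cannot give the sharp exponent. The small-moment bound is the \emph{diagonal} estimate $\MV\lse P^{s_0+\epsilon}$, and classical Vinogradov/Hua methods give $J_{t,k}(P)\lse P^{2t-\frac{k(k+1)}{2}+\eta(t,k)}$ with $\eta(t,k)>0$ for every finite $t$. H\"older between these yields, for $s=\alpha s_0+(1-\alpha)t$,
\[
\lambda_s \le 2s - \alpha s_0 - (1-\alpha)\tfrac{k(k+1)}{2} + (1-\alpha)\eta,
\]
and since $s_0<\tfrac{k(k+1)}{2}$ the right-hand side strictly exceeds $2s-\tfrac{k(k+1)}{2}$. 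In fact the content of \cite{W2} is precisely to run the efficient congruencing iteration \emph{directly} at large $s$ and show it reaches the critical exponent at $s=k^2-1$; once one has the sharp bound at that single value, the bound for all larger $s$ follows trivially from $J_{s,k}(P)\le P^{2(s-s^*)}J_{s^*,k}(P)$, with no interpolation against classical estimates needed or useful.
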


Thus, Theorem \ref{thm:wooley} answers the question up to $\epsilon$ if $s$ is sufficiently larger and smaller than $\norm{a_k}=\frac{k(k+1)}{2}$. Let $\tilde V(k)$ be the least number $s\geq \norm{a_k}$ for which (\ref{eqn:wooley}) holds. A crude standard estimate (see Appendix \ref{sec:JaK}) gives
\begin{equation} \label{eqn:crude}
\JaC \lse P^{2s-\norm{a}+\epsilon}
\end{equation}
for $s \geq V(a_d)$. However, we expect that (\ref{eqn:crude}) holds for a larger range of $s$ in view of the lower bound (\ref{eqn:lower}). We denote by $V(a)$ the least number $s$ for which (\ref{eqn:crude}) holds. With $r=2s=2V(a)$ and $\delta = \norm{a}$, Theorem \ref{thm:var} implies the following:
\begin{thm} \label{thm:JaK}
Let $a_d \geq 3$ and $\lambda_a=1-\frac{\norm{a}}{2V(a)}$. Then $\JaK$ extends to a bounded operator from $l^p(\zn{d})$ to $l^q(\zn{d})$ if $\lambda_a < \lambda <1$ and $p,q$ satisfy 
\begin{itemize}
\item[(i)] $\frac{1}{q}  <  \frac{1}{p} - \frac{1}{\norm{a}}(1-\lambda)$ and
\item[(ii)] $\frac{1}{p} > \frac{V(a)}{\norm{a}}(1-\lambda), \frac{1}{q}  <  1-\frac{V(a)}{\norm{a}} (1-\lambda).$
\end{itemize}
\end{thm}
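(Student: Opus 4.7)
The plan is to obtain Theorem \ref{thm:JaK} as a direct application of Theorem \ref{thm:var} to the curve $\gamma = \gamma^a$, with the choices $r = 2V(a)$ and $\delta = \norm{a}$. The hypothesis $a_d \geq 3$ guarantees that $V(a)$ is finite: by Theorem \ref{thm:wooley} together with the crude estimate (\ref{eqn:crude}), the defining inequality for $V(a)$ is satisfied for $s$ at least as large as the Vinogradov threshold in degree $a_d$, so the least such $s$ exists.

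First I would verify the Diophantine input demanded by Theorem \ref{thm:var}. Since $r = 2V(a)$ is even, the first remark following Theorem \ref{thm:var} reduces the task to the $h = 0$ estimate
\begin{equation*}
\AaZ{2V(a)} \lse P^{2V(a) - \norm{a} + \epsilon}
\end{equation*}
for every $\epsilon > 0$. But in the notation of Section \ref{sec:jak} this quantity is $\mathbf{J}^a_{V(a)}(P)$, and the stated inequality is precisely the defining property of $V(a)$. Hence the hypothesis of Theorem \ref{thm:var} is met with $r = 2V(a)$ and $\delta = \norm{a}$.

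Next I would simply read off the conclusion. Substituting these values, the admissible range $1 - \frac{\delta}{r} < \lambda < 1$ of Theorem \ref{thm:var} becomes $1 - \frac{\norm{a}}{2V(a)} < \lambda < 1$, i.e.\ $\lambda_a < \lambda < 1$. With $s = V(a)$ and $\delta = \norm{a}$, condition (i) of Theorem \ref{thm:var} becomes $\frac{1}{q} < \frac{1}{p} - \frac{1-\lambda}{\norm{a}}$, and condition (ii) becomes $\frac{1}{p} > \frac{V(a)}{\norm{a}}(1-\lambda)$ and $\frac{1}{q} < 1 - \frac{V(a)}{\norm{a}}(1-\lambda)$. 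These are exactly conditions (i) and (ii) of Theorem \ref{thm:JaK}, so the desired $l^p \to l^q$ bound follows.

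There is no genuine obstacle beyond invoking the two previous results; all of the analytic work is absorbed into Theorem \ref{thm:var}, and all of the arithmetic input into the definition of $V(a)$. The only conceptual point worth flagging is the reduction to $h = 0$: the Fourier-analytic identity $\AL{2s} = \int_{[0,1]^d} \abs{S^\gamma(\alpha)}^{2s} e(-h \cdot \alpha)\, d\alpha$ shows $\AL{2s} \leq \AZ{2s}$ uniformly in $h$, so Vinogradov-style mean value bounds, which are naturally phrased for $h = 0$, feed directly into the hypothesis of Theorem \ref{thm:var} without any additional work.
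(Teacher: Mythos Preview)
Your proposal is correct and follows exactly the paper's own approach: the paper simply states that Theorem \ref{thm:JaK} follows from Theorem \ref{thm:var} with $r=2s=2V(a)$ and $\delta=\norm{a}$, invoking the remark that for even $r$ the uniform estimate on $\A$ reduces to the $h=0$ case, which is the defining property of $V(a)$.
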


If one has $V(a)=\norm{a}$, then Theorem \ref{thm:JaK} would imply the nearly sharp result toward Conjecture \ref{conj:JK} for $\frac{1}{2}<\lambda < 1$. In order to extend this to the full range of $0<\lambda<1$, we need the stronger estimate
\begin{equation} \label{eqn:strongestimate}
 N^{\gamma_a}_{2\norm{a}-1}(P;h) \ll P^{\norm{a}-1+\epsilon}
\end{equation}
uniformly in $h\in \zn{d}$. Indeed, Oberlin's result for $J_{2,\lambda}$ is based on the estimate (\ref{eqn:strongestimate}) which is valid for $a=(1,2)$. See \cite{O} for details.

We record the results for the special case $\JK = {J^{a_k}_{\lambda}}$, where $a_k=(1,2,\cdots,k)$ from Theorem \ref{thm:wooley}. Theorem \ref{thm:var} with $r=2s=2(k^2-1)$ and $\delta = \frac{k(k+1)}{2}$ gives
\begin{cor} \label{thm:JK}
Let $k \geq 3$ and $\lambda_k=1-\frac{k}{4(k-1)}$. Then $\JK$ extends to a bounded operator from $l^p(\zn{k})$ to $l^q(\zn{k})$ if $\lambda_k < \lambda< 1$ and $p,q$ satisfy 
\begin{itemize}
\item[(i)] $\frac{1}{q}  <  \frac{1}{p} - \frac{2}{k(k+1)}(1-\lambda)$ and
\item[(ii)] $\frac{1}{p} > \frac{2(k-1)}{k}(1-\lambda), \frac{1}{q}  <  1-\frac{2(k-1)}{k}(1-\lambda).$
\end{itemize}
\end{cor}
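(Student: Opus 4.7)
The plan is to apply Theorem \ref{thm:var} directly, with the Diophantine input supplied by Theorem \ref{thm:wooley}. Writing $\gamma^{a_k}(m) = (m, m^2, \ldots, m^k)$ with $a_k = (1,2,\ldots,k)$, one has $J_{k,\lambda} = J^{\gamma^{a_k}}_\lambda$, $\norm{a_k} = k(k+1)/2$, and $N^{\gamma^{a_k}}_{2s}(P;0) = \mathbf{J}_{s,k}(P)$. I will take $r = 2s = 2(k^2-1)$ and $\delta = k(k+1)/2$.

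First I note that $s = k^2 - 1$ lies in the admissible range $s \geq k^2-1$ of Theorem \ref{thm:wooley}, so
\begin{equation*}
\mathbf{J}_{s,k}(P) \ll P^\epsilon \bigl(P^s + P^{2s - k(k+1)/2}\bigr).
\end{equation*}
For $k \geq 3$ one has $s = k^2 - 1 \geq k(k+1)/2 = \norm{a_k}$, hence $2s - k(k+1)/2 \geq s$ and the second term dominates, giving $\mathbf{J}_{s,k}(P) \ll P^{2s - \delta + \epsilon}$. By Remark (1) following Theorem \ref{thm:var}, this upgrades to the uniform bound $N^{\gamma^{a_k}}_{2s}(P; h) \ll P^{2s - \delta + \epsilon}$ in $h \in \zn{k}$, which is precisely the hypothesis of Theorem \ref{thm:var}.

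It then remains to translate the output of Theorem \ref{thm:var} into the form claimed. The $\lambda$-threshold
\begin{equation*}
1 - \frac{\delta}{r} = 1 - \frac{k(k+1)/2}{2(k^2-1)} = 1 - \frac{k}{4(k-1)}
\end{equation*}
equals $\lambda_k$; the quantity $1/\delta = 2/(k(k+1))$ converts condition (i) into its stated form; and $s/\delta = (k^2-1)/(k(k+1)/2) = 2(k-1)/k$ turns condition (ii) into the stated $\frac{1}{p} > \frac{2(k-1)}{k}(1-\lambda)$ and $\frac{1}{q} < 1 - \frac{2(k-1)}{k}(1-\lambda)$.

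Since the argument is essentially an arithmetic substitution, there is no genuine obstacle; the only choice worth making is to fix $s$ at the smallest admissible value in Theorem \ref{thm:wooley}, namely $s = k^2-1$, because enlarging $s$ would only worsen $1 - \delta/(2s)$ and hence the $\lambda$-range. Were Theorem \ref{thm:wooley} eventually known down to $s = \norm{a_k}$, the same substitution would recover the nearly sharp range $\lambda > 1/2$ toward Conjecture \ref{conj:JK}.
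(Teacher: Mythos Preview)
Your proof is correct and matches the paper's approach exactly: the paper derives this corollary by applying Theorem~\ref{thm:var} with $r=2s=2(k^2-1)$ and $\delta=\tfrac{k(k+1)}{2}$, using Theorem~\ref{thm:wooley} (together with Remark~(1)) as the Diophantine input. Your additional verification that $s=k^2-1\ge \tfrac{k(k+1)}{2}$ for $k\ge 3$ and the arithmetic checks for $\lambda_k$, $1/\delta$, and $s/\delta$ fill in the details the paper leaves implicit.
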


Theorem \ref{thm:var} with $r=2s=2\delta = 2\lfloor\frac{(k+1)^2}{4} \rfloor$ implies
\begin{cor} \label{thm:JK2}
Let $k \geq 3$ and $\frac{1}{2}<\lambda< 1$. Then $\JK$ extends to a bounded operator from $l^p(\zn{k})$ to $l^q(\zn{k})$ if $p,q$ satisfy 
\begin{itemize}
\item[(i)] $\frac{1}{q}  <  \frac{1}{p} - \frac{1}{\lfloor\frac{(k+1)^2}{4}\rfloor}(1-\lambda)$ and
\item[(ii)] $\frac{1}{p} > 1-\lambda, \frac{1}{q}  <  \lambda.$
\end{itemize}
\end{cor}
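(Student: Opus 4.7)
The plan is to apply Theorem \ref{thm:var} directly to the curve $\gamma^{a_k}(m) = (m, m^2, \ldots, m^k)$, with the required bound on the number of Diophantine solutions supplied by Theorem \ref{thm:wooley}. Here $\norm{a_k} = k(k+1)/2$. I would choose $s = \lfloor (k+1)^2/4 \rfloor$ and set $r = 2s$, $\delta = s$.

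Since $k \geq 3$ we have $1 \leq s \leq (k+1)^2/4$, so Theorem \ref{thm:wooley} applies and yields
\[
\MV \lse P^{\epsilon}\bigl( P^s + P^{2s - k(k+1)/2} \bigr).
\]
The inequality $(k+1)^2/4 \leq k(k+1)/2$, which reduces to $k \geq 1$, shows $s \leq \norm{a_k}$, and consequently $P^s$ dominates the right-hand side. Hence $\MV \lse P^{r - \delta + \epsilon}$, and by Remark (1) following Theorem \ref{thm:var} this upgrades to
\[
N^{\gamma^{a_k}}_r(P;h) \lse P^{r-\delta+\epsilon}
\]
uniformly in $h \in \zn{k}$.

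With $r = 2s$ and $\delta = s$ now established, Theorem \ref{thm:var} provides $l^p(\zn{k}) \to l^q(\zn{k})$ boundedness on the range $1 - \delta/r = 1/2 < \lambda < 1$. Condition (i) of Theorem \ref{thm:var} specializes to $\frac{1}{q} < \frac{1}{p} - \frac{1-\lambda}{\lfloor (k+1)^2/4 \rfloor}$, and since $s/\delta = 1$, condition (ii) becomes $\frac{1}{p} > 1 - \lambda$ and $\frac{1}{q} < \lambda$. These match the hypotheses of Corollary \ref{thm:JK2}. The derivation is essentially bookkeeping, so there is no substantial obstacle; the one point of judgment is the observation that $s = \lfloor (k+1)^2/4 \rfloor$ is the largest Wooley-admissible choice for which $P^s$ still dominates $P^{2s - \norm{a_k}}$, making $\delta = s$ the optimal ratio that the present formulation of Theorem \ref{thm:wooley} allows us to extract in this regime.
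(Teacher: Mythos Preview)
Your proof is correct and follows precisely the paper's own approach: the paper states the corollary as an immediate consequence of Theorem \ref{thm:var} with $r=2s=2\delta = 2\lfloor\frac{(k+1)^2}{4}\rfloor$, and you have supplied exactly the routine verification (via Theorem \ref{thm:wooley}, Remark (1), and the check $s\le \norm{a_k}$) that this choice of parameters works.
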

Corollary \ref{thm:JK2} complements Corollary \ref{thm:JK} in the sense that it allows a wider range of applicable $\lambda$ and that the condition (ii) is optimal at the expense of relaxing condition (i). 

\begin{figure}[t]
\begin{tikzpicture}[scale=5][domain=0:1] 
    \draw[step=.2cm, style=help lines] (0,0) rectangle (1,1);
    \draw (0,0) -- (1,0) node[right] {$1/p$};
   \draw (0,0) -- (0,1) node[above] {$1/q$};

\foreach \x/\xtext in {0,0.3333 /\frac{1}{3}, 0.4444 /\frac{4}{9} , 0.75 /\frac{3}{4}, 1}
\draw (\x cm,0.5pt) -- (\x cm,-0.5pt) node[anchor=north] {$\xtext$};
\foreach \y/\ytext in { 0.25 /\frac{1}{4}, 0.3889 /\frac{7}{18}, 0.6667 / \frac{2}{3}, 1}
\draw (0.5pt,\y cm) -- (-0.5pt,\y cm) node[anchor=east] {$\ytext$};

%Conjectured range
\draw[dashed] (0.3333, 0) -- (0.3333, 0.2778); %(1/3,5/18), (13/18,2/3)$
\draw (0.3333, 0.2778) -- (0.7222, 0.6667);
\draw[dashed] (0.7222, 0.6667) -- (1, 0.6667);

\draw [fill=lightgray] (0.3333, 0) -- (0.3333, 0.25) -- (0.4444, 0.3889) -- (0.6111, 0.5556) -- (0.75, 0.6667) -- (1, 0.6667) -- (1,0) -- (0.3333, 0);
\draw [white] (0.3333, 0) -- (0.3333, 0.25) -- (0.4444, 0.3889) -- (0.6111, 0.5556) -- (0.75, 0.6667) -- (1, 0.6667);
\draw [dashed] (0.3333, 0) -- (0.3333, 0.25) -- (0.4444, 0.3889) -- (0.6111, 0.5556) -- (0.75, 0.6667) -- (1, 0.6667) ;

\draw (0.3333, 0.25) circle (0.2pt); %node[anchor=east]{$(\frac{1}{3},\frac{1}{6})$};
\draw (0.4444, 0.3889) circle (0.2pt);
\draw (0.6111, 0.5556) circle (0.2pt);
\draw (0.75, 0.6667) circle (0.2pt);
\end{tikzpicture}
\caption{\label{fig:pic} The known region of boundedness of $J_{3,\frac{2}{3}}$}
\end{figure}
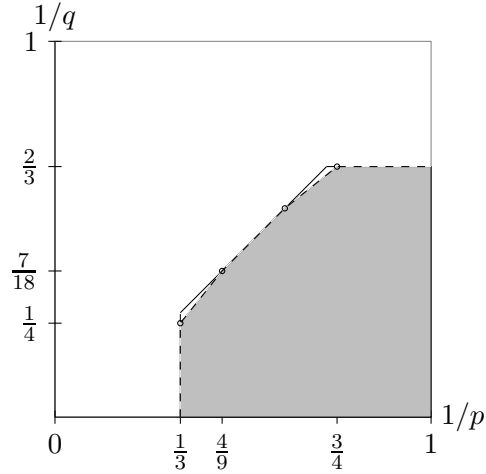

Figure \ref{fig:pic} illustrates Corollary \ref{thm:JK} and \ref{thm:JK2} for the case $k=3$ and $\lambda = \frac{2}{3}$. It shows the known (shaded) and the conjectured range of exponents $(\frac{1}{p},\frac{1}{q}) \in [0,1]^2$ where $J_{3,\frac{2}{3}}$ is bounded from $l^p(\zn{3})$ to $l^q(\zn{3})$. Corollary \ref{thm:JK} and \ref{thm:JK2} give the vertices $(\frac{4}{9},\frac{7}{18})$ and $(\frac{1}{3},\frac{1}{4})$, respectively. 

\subsection{Relation to Waring's problem} \label{sec:Waring}
The purpose of this section is twofold; to give a slight improvement to one of the results on the operator $\IK$ in \cite{P2} and to give a correction to an interpolation lemma in \cite{P2}. We thank Pierce for encouraging us to clarify the issue here.

We denote by $r_{s,k}(l)$ the number of representations of $l$ as a sum of $s$ positive $k$-th powers. Then there is an estimate
\begin{equation} \label{eqn:asy}
r_{s,k}(l) \ll l^{s/k-1+\epsilon}
\end{equation}
for every sufficiently large $s$ with respect to $k$, see \cite{V}. Let $\tilde G(k)$ to be the least natural number $s$ for which the estimate $(\ref{eqn:asy})$ holds. Since the work by Hardy and Littlewood that $\tilde G(k) \leq (k-2)2^{k-1} +5$, numerous improvements have been achieved. We refer the reader to \cite{FW} and references therein. 

In \cite{P2}, (\ref{eqn:asy}) was applied to obtain estimates on the mean values of the Weyl-sums. Instead, we use it to obtain estimates on $N^\gamma_{2s-1}(P;h)$, which is the number of the solutions of the Diophantine equation of $2s-1$ variables
\begin{equation}\label{eqn:dodd}
 x_1^k + \cdots + x_s^k = x_{s+1}^k + \cdots + x_{2s-1}^k + h
\end{equation}
for $x_i \in [1,P]\cap \mathbb{Z}$ and $h\in \mathbb{Z}$. Indeed, one has
\begin{equation} \label{eqn:essodd}
N^\gamma_{2s-1}(P;h) \ll P^{2s-1 - k + \epsilon}
\end{equation}
for $s\geq \tilde G(k)$ uniformly in $h\in \Z$.

For the convenience of the reader, we record the argument here. One first considers the number of solutions $(x_1,\cdots,x_s)$ for each fixed $(x_{s+1},\cdots,x_{2s-1})$. It is $O(P^{s-k+\epsilon})$ uniformly in $h$ by (\ref{eqn:asy}) since we may assume that the right hand side of (\ref{eqn:dodd}) is $O(P^k)$. We get (\ref{eqn:essodd}) since there are $P^{s-1}$ many choices for $(x_{s+1},\cdots,x_{2s-1})$.

The estimate (\ref{eqn:essodd}) and Theorem \ref{thm:var} with $r=2\tilde G(k)-1$ give the following which slightly improves the allowable range of $\lambda$ in \cite{P2}.
\begin{thm} \label{thm:IK}
Let $\lambda_{k} = 1-\frac{k}{2\tilde G(k)-1}$. Then
$\IK$ extends to a bounded operator from $l^p(\mathbb{Z})$ to $l^q(\mathbb{Z})$ if $\lambda_{k} < \lambda < 1$ and $p,q$ satisfy
\begin{itemize}
\item[(i)] $\frac{1}{q}  <  \frac{1}{p} - \frac{1}{k}(1-\lambda)$ and
\item[(ii)] $\frac{1}{p} > \frac{\tilde G(k)}{k}(1-\lambda), \frac{1}{q}  <  1-\frac{\tilde G(k)}{k} (1-\lambda)$.
\end{itemize}
\end{thm}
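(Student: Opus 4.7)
The plan is to apply Theorem \ref{thm:var} directly to $\gamma(m) = m^k$ with an odd number of variables, taking $r = 2\tilde G(k) - 1$. The required Diophantine input is precisely (\ref{eqn:essodd}), whose derivation is already spelled out above: fix $(x_{s+1},\ldots,x_{2s-1})$, invoke the Waring-type asymptotic (\ref{eqn:asy}) to bound the number of completions $(x_1,\ldots,x_s)$ by $O(P^{s-k+\epsilon})$ uniformly in the residual target, and then sum over the $P^{s-1}$ choices of the fixed variables. For any $s \geq \tilde G(k)$, this gives $N^\gamma_{2s-1}(P;h) \ll P^{2s-1-k+\epsilon}$ uniformly in $h \in \Z$.

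With $s = \tilde G(k)$, $r = 2s - 1$, and $\delta = k$, we are in the odd case of Theorem \ref{thm:var}. A direct substitution matches the hypotheses of Theorem \ref{thm:IK} term by term: the lower threshold $1 - \delta/r$ becomes $1 - \tfrac{k}{2\tilde G(k) - 1} = \lambda_k$; condition (i) of Theorem \ref{thm:var} reads $\tfrac{1}{q} < \tfrac{1}{p} - \tfrac{1-\lambda}{k}$; and condition (ii) reads $\tfrac{1}{p} > \tfrac{\tilde G(k)}{k}(1-\lambda)$ together with $\tfrac{1}{q} < 1 - \tfrac{\tilde G(k)}{k}(1-\lambda)$. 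Theorem \ref{thm:var} then delivers the claimed $l^p \to l^q$ bounds immediately.

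There is essentially no obstacle beyond the two inputs already in hand, since the heavy lifting sits in Theorem \ref{thm:var} and in the estimate (\ref{eqn:asy}) coming from the circle method. The conceptual point worth emphasizing, and the reason the theorem improves on the range of $\lambda$ obtained in \cite{P2}, is that feeding the odd-variable count (\ref{eqn:essodd}) into Theorem \ref{thm:var}, rather than an even-moment mean value of the Weyl sum with the same exponent saving $\delta = k$, uses one fewer variable and thus lowers the threshold from $1 - \tfrac{k}{2\tilde G(k)}$ to $1 - \tfrac{k}{2\tilde G(k)-1} = \lambda_k$. This is exactly the phenomenon flagged in Section \ref{sec:intro}: estimates for Diophantine systems with an odd number of unknowns extend the allowable range of $\lambda$.
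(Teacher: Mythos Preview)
Your proposal is correct and follows exactly the paper's approach: derive the odd-variable estimate (\ref{eqn:essodd}) from (\ref{eqn:asy}) by fixing $s-1$ variables, then apply Theorem \ref{thm:var} with $r=2\tilde G(k)-1$, $s=\tilde G(k)$, $\delta=k$. Your explanation of why the odd case lowers the threshold from $1-\tfrac{k}{2\tilde G(k)}$ to $\lambda_k$ is also the point the paper makes.
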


We note that the optimal condition (ii-c) $1/q < \lambda, 1/p > 1-\lambda$ in the statement of \cite[Theorem 4]{P2} should be replaced by the weaker condition (ii) in Theorem \ref{thm:IK}. This is due to an error in the interpolation lemma \cite[Lemma 2]{P2}. The condition (ii) $1/q < \lambda, 1/p > 1-\lambda$ in the lemma should be corrected to a weaker condition (ii) $1/p > \frac{1-\lambda}{2(1-\eta)}, 1/q < 1-\frac{1-\lambda}{2(1-\eta)}$.

\section{Proof of Theorem \ref{thm:var}}   \label{sec:pf}
The theorem can be reduced to certain restricted weak type estimates. We decompose the operator $\I$ dyadically.
Define $\II$ by
\begin{equation*}
\II(f)(n) = 2^{-\lambda j} \sum_m f(n-\gamma(m)) 
\end{equation*}
for $j\geq 0$ where $\sum_m$ denotes $\sum_{2^j \leq m < 2^{j+1} }$. 

Oberlin \cite{O} used the following lemma contained in the proof of Lemma $1$ of \cite{C}.
\begin{lemma}[Christ \cite{C}] \label{lem:chr} Suppose that $T$ is an operator taking characteristic functions $\chi_E$ onto measurable functions with $T\chi_E \geq 0$ for any measurable set $E$. Given $\alpha>0$ and $E$ with $0<|E|<\infty$, take $F = \{x: \alpha < T\chi_E(x) < 2\alpha \}$ and $\beta = \abs{E}^{-1}\inn{\chi_F, T\chi_E}$. For $k = 0, 1, \dots$ there are positive constants $\delta_k$ and $\epsilon_k$ (depend only on $k$) such that the sets $E_k$ and $F_k$ defined by $E_0 = E$, $F_0 = F$,
\begin{align*}
E_{k+1} &= \{x \in E_k : T^*\chi_{F_k}(x) \geq \delta_k \beta \}, \\
F_{k+1} &= \{y \in F_k : T\chi_{E_{k+1}}(y) \geq \epsilon_k\alpha\},
\end{align*}
are nonempty provided that $|F|>0$.
\end{lemma}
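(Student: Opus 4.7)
The plan is to prove the lemma by induction on $k$, tracking the bilinear quantity $\int_{F_k} T\chi_{E_k}$, which begins equal to $\beta|E|$ by definition and decreases in a controlled way as we pass to refinements.

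First, I would record the two-sided comparison
\[
\alpha|F| \;<\; \beta|E| \;\leq\; 2\alpha|F|,
\]
obtained by integrating the defining inequalities $\alpha < T\chi_E < 2\alpha$ over $F$ and recalling that $\beta|E|=\int_F T\chi_E$. In particular $\beta > 0$ whenever $|F| > 0$, and the bound $\alpha|F| \leq \beta|E|$ provides the bridge that lets losses measured at the $F$-side be re-expressed in terms of the main bilinear quantity.

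The core of the argument is the inductive claim that there exist constants $C_k > 0$ depending only on $k$ with
\[
\int_{F_k} T\chi_{E_k} \;\geq\; C_k\,\beta|E|.
\]
The base case $C_0 = 1$ is simply the definition of $\beta$. For the induction step, I use the adjoint identity $\int_{F_k} T\chi_A = \int_A T^*\chi_{F_k}$ (valid for measurable $A\subseteq E_k$) together with the additivity of $T$ on disjoint characteristic functions. On $E_k\setminus E_{k+1}$, the defining condition gives $T^*\chi_{F_k} < \delta_k\beta$, so the mass lost in passing from $\chi_{E_k}$ to $\chi_{E_{k+1}}$ is at most $\delta_k\beta|E_k\setminus E_{k+1}|\leq \delta_k\beta|E|$; choosing $\delta_k = C_k/2$ retains at least $\tfrac{C_k}{2}\beta|E|$. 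A parallel step at the $F$-side: on $F_k\setminus F_{k+1}$ we have $T\chi_{E_{k+1}} < \epsilon_k\alpha$, so the further loss is bounded by $\epsilon_k\alpha|F|$, which by the initial comparison is at most $\epsilon_k\beta|E|$. Taking $\epsilon_k = C_k/4$ yields the recursion $C_{k+1} = C_k/4$ and hence $C_k = 4^{-k}$.

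The invariant forces non-emptiness at every stage: since $\int_{F_k} T\chi_{E_k} \geq 4^{-k}\beta|E| > 0$, the set $F_k$ has positive measure, and since $T\chi_{E_k}$ would vanish identically if $|E_k|=0$, also $|E_k| > 0$. The only delicate bookkeeping point is ensuring that both losses (on the $E$-side and the $F$-side) are measured in the common unit $\beta|E|$ so the recursion closes; this is precisely what the two-sided comparison $\alpha|F| \asymp \beta|E|$ established at the outset provides, and it is the reason the threshold $\epsilon_k\alpha$ appears at the $F$-side while $\delta_k\beta$ appears at the $E$-side.
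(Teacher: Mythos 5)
The paper does not prove this lemma itself but cites it from the proof of Lemma~1 of \cite{C}; your argument reproduces Christ's refinement scheme of tracking the pairing $\int_{F_k} T\chi_{E_k}$, showing it shrinks by at most a bounded factor at each stage, and the bookkeeping (base case $C_0=1$, choices $\delta_k = C_k/2$, $\epsilon_k = C_k/4$, recursion $C_{k+1}=C_k/4$, and the bridge $\alpha|F| \leq \beta|E|$ converting $F$-side loss into units of $\beta|E|$) is correct and yields $|E_k|,|F_k|>0$, hence nonemptiness. The one hypothesis you invoke without comment---linearity (or at least additivity) of $T$, needed both for the adjoint identity $\int_{F_k}T\chi_A=\int_A T^*\chi_{F_k}$ and for splitting $T\chi_{E_k}$ over the partition $E_{k+1}\sqcup(E_k\setminus E_{k+1})$---is already implicit in the lemma's use of $T^*$ and is satisfied by $T=\II$ in the paper's application, so this is not a gap.
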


Let $T=\II, \alpha>0, E\subset \zn{d}$ and $F\equiv F^j, \beta \equiv {\beta}_j, E_k\equiv E^j_k, F_k\equiv F^j_k$ be as in Lemma \ref{lem:chr}. We may assume that $|F|>0$.

Note that $n \in E_k$ implies
\begin{equation} \label{eqn:beta2}
\sum_m \chi_{F_{k-1}}( n + \gamma(m) ) \geq 2^{\lambda j} \delta_{k-1}\beta
\end{equation}
and $n \in F_k$ implies
\begin{equation}  \label{eqn:alpha2}
\sum_m \chi_{E_{k}}( n - \gamma(m) ) \geq 2^{\lambda j} \epsilon_{k-1}\alpha
\end{equation}
for $k\geq 1$. 

When $r= 2s -1$, we define the sum $S_{r}$ by
\begin{equation*}
S_{r}  =  \sum_{m_1} \sum_{m_2} \cdots \sum_{m_{r}} \chi_E \left( n + \sum_{i=1}^{r} (-1)^i \gamma(m_i) \right)
\end{equation*} 
for a fixed $n \in F_{s-1}$.

Since $n \in F_{s-1}$, there are at least $2^{\lambda j} \epsilon_{s-2}\alpha$ values of $m_1 \in [2^j,2^{j+1})$ such that $n-\gamma(m_1)  \in E_{s-1}$ by ($\ref{eqn:alpha2}$). For each of these $m_1$, there are at least $ 2^{\lambda j} \delta_{s-2}\beta$ values of $m_2 \in [2^j,2^{j+1})$ such that $n-\gamma(m_1) + \gamma(m_2) \in F_{s-2}$ by ($\ref{eqn:beta2}$). Continuing in this manner, we get the following lower bound of $S_{r}$:
\begin{equation} \label{eqn:comb}
S_{r} \gtrsim 2^{(2s-1)\lambda j} \alpha^{s} \beta^{s-1} \gtrsim 2^{r\lambda j} \alpha^{r}\frac{|F|^{s-1}}{|E|^{s-1}}
\end{equation}
since $\beta \geq \alpha \frac{|F|}{|E|}$ and $r=2s-1$.

Next, we get an upper bound for $S_r$. Let $E' = n - E$.
\begin{equation} \label{eqn:fi}
\begin{split}
S_{r}  &=  \sum_{m_1} \sum_{m_2} \cdots \sum_{m_{r}} \chi_{E'} \left( \sum_{i=1}^{r} (-1)^{i+1} \gamma(m_i) \right) \\
 &= \sum_{l \in E'}\sumab{\sum_{i=1}^{r} (-1)^{i+1} \gamma(m_{i}) = l}{2^j \leq m_i < 2^{j+1}} 1 \leq \sum_{l \in E'} N^\gamma_{r}(2^{j+1};l) \lse 2^{(r-\delta+\epsilon) j} |E|,
\end{split}
\end{equation}
where we fix $\epsilon>0$ so that $\lambda = \frac{r- \delta + 2\epsilon}{r}$.
$(\ref{eqn:comb})$ and $(\ref{eqn:fi})$ give
\begin{equation*} 
\alpha^{r} |F|^{s-1} \lse 2^{-\epsilon j}  |E|^{s},
\end{equation*}
or equivalently,
\begin{equation} \label{eqn:final}
\alpha^{r} |\{\II(\chi_E)(n) >\alpha \}|^{s-1}  \lse 2^{-\epsilon j}  |E|^{s}.
\end{equation}

Since $\I(\chi_E)(n) \leq \sum_{j=0}^\infty \II(\chi_E)(n)$, $(\ref{eqn:final})$ implies 
\begin{equation*} 
\alpha^{r} |\{\I(\chi_E)(n) >\alpha \}|^{s-1} \lse |E|^{s},
\end{equation*}
which is equivalent to the restricted weak-type $(\frac{2s-1}{s},\frac{2s-1}{s-1})$ estimate for $\I$. 

When $r=2s$, we take the sum $S_{r}$ by
\begin{equation*}
S_{r}  =  \sum_{m_1} \sum_{m_2} \cdots \sum_{m_{r}} \chi_E \left( n + \sum_{i=1}^{r} (-1)^{i+1} \gamma(m_i) \right)
\end{equation*}
for a fixed $n \in E_{s}$. After a few similar estimates, we get the restricted weak-type $(\frac{2s}{s+1},2)$ estimate for $\I$.

Moreover, the same estimates are valid for a complex-valued $\lambda$ as long as $\Re(\lambda) > 1-\frac{\delta}{r}$. For each fixed such $\lambda$, we first obtain strong type estimates with bounds uniform in $\Im(\lambda)$ by real interpolation with the $l^1 \to l^\infty$ bound for $\Re(\lambda) \geq 0$. Next, we apply analytic interpolation with the $l^\infty \to l^\infty$ bound for $\Re(\lambda) > 1$. Finally, inclusion property of $l^p$ spaces completes the proof. We refer the reader to \cite[Chapter V]{SWe} for the interpolation theorems used here.

\section{Discrete fractional integral along the hyperbolic paraboloid in $\zn{3}$} \label{sec:hyp}
The discrete fractional integrals, where the summation is taken along positive definite quadratic forms in several variables, have been studied by Pierce \cite{P1} via studying the Fourier multiplier of the operator. Motivated by \cite{P1}, we study the discrete fractional integral along the hyperbolic paraboloid in $\zn{3}$, defined by
\begin{equation*}
\mathcal{P}_\lambda f(n_1,n_2,n_3) = \sum_{m \in \zn{2}\setminus 0} \frac{f(n_1-m_1, n_2-m_2, n_3- (m_1^2-m_2^2))}{|m|^{2\lambda}},
\end{equation*}
acting on (initially) compactly supported functions $f:\zn{3} \to \mathbb{C}$. 

\begin{thm} \label{thm:JL}
Let $0<\lambda< 1$. Then $\mathcal{P}_{\lambda}$ extends to a bounded operator from $l^p(\zn{3})$ to $l^q(\zn{3})$ if $p,q$ satisfy
\begin{itemize}
\item[(i)] $\frac{1}{q}  <  \frac{1}{p} - \frac{1}{2}(1-\lambda)$ and
\item[(ii)] $\frac{1}{p} > 1-\lambda, \frac{1}{q}  <  \lambda$.
\end{itemize}
\end{thm}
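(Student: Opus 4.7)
The strategy is to adapt the combinatorial argument of Section~\ref{sec:pf} to the two-dimensional parameter $m \in \mathbb{Z}^2 \setminus 0$, with the essential new input being a sharp Diophantine estimate for the hyperbolic paraboloid. Writing $\gamma(m_1, m_2) = (m_1, m_2, q(m))$ where $q(m) = m_1^2 - m_2^2$, the central quantity is
\begin{equation*}
N_3(P; h) := \#\Big\{(x_1, x_2, x_3) \in \bigl(\mathbb{Z}^2 \cap \{0 < |x| \leq P\}\bigr)^3 : \gamma(x_1) - \gamma(x_2) + \gamma(x_3) = h\Big\},
\end{equation*}
for which I claim $N_3(P; h) \lesssim P^{2+\epsilon}$ uniformly in $h \in \mathbb{Z}^3$. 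To prove this, fix $x_2$ (yielding $\lesssim P^2$ choices), eliminate $x_3 = h' + x_2 - x_1$ via the linear part, and substitute into the quadratic part; after completing the square, the resulting equation in $x_1 = (a, b)$ rewrites as $(2a - y_1)^2 - (2b - y_2)^2 = K(x_2, h)$ with $y = h' + x_2$ and $K$ explicit. For $x_2$ with $K(x_2, h) \neq 0$, the divisor bound for factorizations $K = (A-B)(A+B)$ produces $\lesssim P^\epsilon$ integer solutions $(a, b)$ with $|a|, |b| \lesssim P$. The exceptional set $\{x_2 : K(x_2, h) = 0\}$ is itself a conic in $x_2$ containing $\lesssim P$ lattice points (or $\lesssim P^2$ in the doubly degenerate case), each contributing $O(P)$ solutions from the rulings $A = \pm B$. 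Summation yields $N_3(P; h) \lesssim P^{2+\epsilon}$ uniformly in $h$.

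Given the Diophantine estimate, the argument of Section~\ref{sec:pf} transplants almost verbatim, the only change being that the dyadic operator $\mathcal{P}_{\lambda, j}$ (supported on $|m| \sim 2^j$ in the decomposition $\mathcal{P}_\lambda = \sum_{j \geq 0} \mathcal{P}_{\lambda, j}$) carries weight $2^{-2\lambda j}$, so each application of $(\ref{eqn:beta2})$ or $(\ref{eqn:alpha2})$ absorbs $2^{2\lambda j}$ in place of $2^{\lambda j}$. Taking $s = 2$, $r = 2s - 1 = 3$ and starting from $n \in F^j_1$, iteration yields
\begin{equation*}
2^{6\lambda j}\, \alpha^3\, \frac{|F|}{|E|} \lesssim S_3 \lesssim 2^{(2+\epsilon) j} |E|,
\end{equation*}
hence $\alpha^3 |F| \lesssim 2^{((2+\epsilon) - 6\lambda) j} |E|^2$. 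Summation over $j \geq 0$ is convergent exactly for $\lambda > 1/3$ and yields the restricted weak-type $(3/2, 3)$ estimate for $\mathcal{P}_\lambda$ at the corner $(\tfrac{1}{p}, \tfrac{1}{q}) = (\tfrac{2}{3}, \tfrac{1}{3})$. Real interpolation against the $l^1 \to l^\infty$ bound (for $\mathrm{Re}(\lambda) \geq 0$) and analytic interpolation against the $l^\infty \to l^\infty$ bound (for $\mathrm{Re}(\lambda) > 1$), exactly as in the last paragraph of the proof of Theorem~\ref{thm:var}, then deliver the sharp region (i), (ii) throughout the range $1/3 < \lambda < 1$.

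For $0 < \lambda \leq 1/3$, condition (ii) by itself forces $\tfrac{1}{p} - \tfrac{1}{q} > 1 - 2\lambda \geq \tfrac{1-\lambda}{2}$, so (i) is automatic and the target is the rectangle cut out by (ii) alone. A further Stein analytic interpolation between the sharp bound just established at some $\lambda_0 \in (1/3, 1)$ and the $l^1 \to l^\infty$ bound at $\mathrm{Re}(z) = 0$ recovers this rectangle: for every interior $(\tfrac{1}{p}, \tfrac{1}{q})$, taking $\lambda_0$ sufficiently close to $1/3$ and setting $\theta = \lambda / \lambda_0$ places the interpolation target in the sharp region at $\lambda_0$, thanks again to the same inequality $\tfrac{1}{p} - \tfrac{1}{q} > 1 - 2\lambda$. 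The principal technical obstacle is the uniformity in $h$ of the Diophantine estimate, reflecting the fact that the hyperbolic paraboloid is doubly ruled: the contribution of lattice points along its two families of embedded lines must be absorbed into the main term $P^{2+\epsilon}$ without any deterioration in $h$.
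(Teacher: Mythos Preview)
Your proposal is correct and follows essentially the same route as the paper: establish $\mathcal{N}_3^\gamma(P;h)\lesssim P^{2+\epsilon}$ by fixing one of the three variables and reducing the remaining system to $A^2-B^2=K$ via completing the square, invoke the divisor bound, then feed $r=3$, $\delta=2$ into the Christ--Oberlin scheme of Section~\ref{sec:pf} and interpolate (first against $l^1\to l^\infty$ at $\Re\lambda\ge 0$, then analytically). One small caveat: your parenthetical about a ``doubly degenerate'' exceptional set with $\lesssim P^2$ points would, taken at face value, yield $O(P^3)$ and break the bound; fortunately $K(x_2,h)$ has nonvanishing quadratic part $x_{2,1}^2-x_{2,2}^2$ in $x_2$, so it is never identically zero and the conic $\{K=0\}$ always carries $\lesssim P$ lattice points in the box.
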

This result is sharp up to endpoint. One can show the necessity of the condition $\frac{1}{q}  \leq  \frac{1}{p} - \frac{1}{2}(1-\lambda)$ by taking $f(n) = |(n_1,n_2)|^{-\alpha}|n_3|^{-\beta}$ for $n_j \geq 1$ and $f(n)=0$ otherwise, for some appropriate $\alpha,\beta>0$. The necessity of condition (ii) can be shown as in Appendix \ref{sec:app3}.

To treat the operator $\mathcal{P}_\lambda$, we consider a variant of $\I$ defined by
\begin{equation*}
\IV(f)(n) = \sum_{m\in \zn{d_0} \setminus 0} \frac{f(n- \gamma(m))}{|m|^{d_0\lambda}}
\end{equation*}
acting on (initially) compactly supported functions $f:\zn{d} \to \mathbb{C}$, where $\gamma : \zn{d_0} \to \zn{d}$ is an injection. 

For fixed $h \in \zn{d}$, $r,P \in \mathbb{N}$, let $\AN$ denote be the number of solutions of the Diophantine system 
\begin{equation*}
\sum_{i=1}^r (-1)^{i+1} \gamma(m_i) = h
\end{equation*}
for $m_i \in B_P$, where $B_P=\{ x\in \zn{d_0}:|x|\leq P\}$ and $|x|^2=x_1^2+\cdots +x_{d_0}^2$. 

We have the following variant of Theorem \ref{thm:var}.
\begin{thm} \label{thm:var2}
Suppose that for each $\epsilon>0$ we have $\AN \lse P^{d_0(r-\delta) + \epsilon}$ for a fixed $r\in\mathbb{N}$ and $\delta>0$ uniformly in $h \in \zn{d}$. Let $s \in\mathbb{N}$ be the number such that we have either $r=2s$ or $r=2s-1$. Then $\IV$ extends to a bounded operator from $l^p(\zn{d})$ to $l^q(\zn{d})$ if $1-\frac{\delta}{r}<\lambda< 1$ and $p,q$ satisfy 
\begin{itemize}
\item[(i)] $\frac{1}{q} < \frac{1}{p} - \frac{1-\lambda}{\delta}$ and
\item[(ii)] $\frac{1}{p}> \frac{s}{\delta}(1-\lambda)$, $\frac{1}{q} < 1 - \frac{s}{\delta}(1-\lambda)$.
\end{itemize}
\end{thm}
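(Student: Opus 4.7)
The plan is to follow the proof of Theorem~\ref{thm:var} almost verbatim, with the one-dimensional intervals $[2^j,2^{j+1})\subset\N$ replaced throughout by dyadic annuli $A_j=\{m\in\zn{d_0}:2^j\leq |m|<2^{j+1}\}$ and the weight $m^{-\lambda}$ replaced by $|m|^{-d_0\lambda}\asymp 2^{-jd_0\lambda}$ on $A_j$. Specifically, I decompose $\IV=\sum_{j\geq 0}\IV_j$, where
$$\IV_j(f)(n)=2^{-jd_0\lambda}\sum_{m\in A_j}f(n-\gamma(m)),$$
and apply Christ's lemma (Lemma~\ref{lem:chr}) to $T=\IV_j$ with a set $E\subset\zn{d}$ and $F=\{n:\alpha<\IV_j(\chi_E)(n)<2\alpha\}$, producing the sets $E_k$, $F_k$ and the quantity $\beta\geq\alpha|F|/|E|$ exactly as before.

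For the lower bound on $S_r$, I fix $n\in F_{s-1}$ when $r=2s-1$ or $n\in E_s$ when $r=2s$, and iterate the defining inequalities for $E_k$ and $F_k$: each step provides $\gtrsim 2^{jd_0\lambda}\delta_{k-1}\beta$ or $\gtrsim 2^{jd_0\lambda}\epsilon_{k-1}\alpha$ admissible values of the corresponding variable $m_i\in A_j$. This yields
$$S_r\;\gtrsim\;2^{rjd_0\lambda}\,\alpha^r\,\frac{|F|^{s-1}}{|E|^{s-1}}$$
in the odd case, and the analogous bound with $\alpha^{2s}|F|^s/|E|^s$ in the even case. For the upper bound, I reorganise $S_r$ by the value $l=\sum_{i=1}^{r}(-1)^{i+1}\gamma(m_i)\in E':=n-E$. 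Since each $m_i\in A_j\subset B_{2^{j+1}}$, the hypothesis gives
$$S_r\;\leq\;\sum_{l\in E'}\mathcal{N}^\gamma_r(2^{j+1};l)\;\lse\;2^{(d_0(r-\delta)+\epsilon')j}\,|E|$$
for every $\epsilon'>0$. Choosing $\epsilon'$ so that $d_0r\lambda=d_0(r-\delta)+2\epsilon'$---possible precisely when $1-\delta/r<\lambda<1$---and combining the two bounds produces a geometric gain $2^{-\epsilon' j}$, and thus the restricted weak-type inequality $\alpha^r|\{\IV_j(\chi_E)>\alpha\}|^{s-1}\lse 2^{-\epsilon'j}|E|^s$ (or its even-case analogue with exponent $2$ on the left). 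Summing in $j$ then delivers the restricted weak-type $(\frac{2s-1}{s},\frac{2s-1}{s-1})$ or $(\frac{2s}{s+1},2)$ estimate for $\IV$ at the relevant $\lambda$.

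To obtain the full range of exponents in (i) and (ii), I run the argument with complex $\lambda$ and combine real interpolation against the trivial $l^1\to l^\infty$ bound (valid for $\Re\lambda\geq 0$) with analytic interpolation against the $l^\infty\to l^\infty$ bound (valid for $\Re\lambda>1$), finishing with $l^p$-inclusion, just as in the last paragraph of Section~\ref{sec:pf}. There is no new conceptual obstacle; the proof is essentially bookkeeping of the dimensional factor $d_0$, and the main observation is that the $d_0$ in the weight exponent $|m|^{d_0\lambda}$ exactly cancels the $d_0$ appearing in the volume of a dyadic $d_0$-dimensional annulus, so the admissible range of $\lambda$ and the constraints on $(p,q)$ coincide with those of Theorem~\ref{thm:var}.
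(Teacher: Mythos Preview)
Your proposal is correct and is precisely the ``straightforward modification'' the paper has in mind: replace the one-dimensional dyadic intervals by the $d_0$-dimensional annuli $A_j$, carry the factor $d_0$ through the weight and the counting hypothesis, and then repeat the Christ-lemma iteration and interpolation steps of Section~\ref{sec:pf} verbatim. There is nothing to add; the paper itself omits the proof for exactly this reason.
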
 
The proof of Theorem \ref{thm:var2} is a straightforward modification of the proof of Theorem \ref{thm:var} and will be omitted.

\begin{proof} [Proof of Theorem \ref{thm:JL}]
By Theorem \ref{thm:var2}, it is enough to show that
\begin{equation} \label{eqn:goal}
\mathcal{N}_3^\gamma (P;h) \lse P^{2+\epsilon}
\end{equation}
uniformly in $h\in \zn{3}$.

Here, $d_0 = 2$, $m=(m_1,m_2) \in \zn{2}$ and $\gamma(m) = (m, \tau(m))$, where $\tau(m) = m_1^2 - m_2^2$. Recall that $\mathcal{N}_3^\gamma (P;h)$ is the number of solutions of the Diophantine system
\begin{equation*}
\begin{split}
x + y &= z + v \\
\tau(x) + \tau(y) &= \tau(z) + t
\end{split}
\end{equation*}
for $x,y,z \in B_P$ and $h=(v,t)\in \zn{2+1}$. 

Since there are $O(P^2)$ many $z$ in $B_P$, it is enough to show that the number of solutions of 
\begin{equation} \label{eqn:bd}
\begin{split}
x + y &= v' \\
\tau(x) + \tau(y) &= t'
\end{split}
\end{equation}
for $x,y \in B_P$ is $O(P^\epsilon)$ uniformly in $h'=(v',t')\in \zn{2+1}$.

Suppose that $(x,y)$ is a solution of (\ref{eqn:bd}). Then $|v'|\leq 2P$ and $|t'| \leq 2P^2$. We make a change of variables $X = 2x - v'$ and $Y = 2y - v'$. Then $(X,Y)$ is a solution of the Diophantine system
\begin{equation*} 
\begin{split}
X+Y &= 0\\
\tau(v' + X) + \tau(v' + Y) &= 4t',
\end{split}
\end{equation*}
which is equivalent to the Diophantine equation
\begin{equation} \label{eqn:bd3}
 (X_1 + X_2)(X_1 - X_2) = N := 2t'-({v'_1}^2 - {v'_2}^2),
\end{equation} 
where $X = (X_1,X_2)$ and $v'=(v'_1,v'_2)$. 

Therefore, the number of solutions $(X_1,X_2)$ of (\ref{eqn:bd3}) is $O(d(N))$, where $d(N)$ is the number of divisors of $N$. The fact that $d(N) = O( |N|^\epsilon)$ (see Chapter 18 of \cite{HW}) and $|N| =O(P^2)$ implies that the number of solutions $(X_1,X_2)$ of (\ref {eqn:bd3}) is $O(P^\epsilon)$ for any $\epsilon>0$, which in turn implies that the number of solutions of (\ref{eqn:bd}) is $O(P^\epsilon)$.

Theorem \ref{thm:var2} with $r=3$ and $\delta = 2$ implies Theorem \ref{thm:JL} for $\frac{1}{3} < \lambda <1$. Interpolating the result with the trivial $l^1(\zn{3}) \to l^\infty(\zn{3})$ bound for $\Re(\lambda) \geq 0$ finishes the proof.
\end{proof}

\section{Appendix}
\subsection{Necessity of conditions in Conjecture \ref{conj:JK}.} \label{sec:app3}
For the necessity of the second condition, we take the example in \cite{SW}; $f(0)=1$, $f(n)=0$ for $n\neq 0$. Then $f \in l^p (\zn{d})$ for all $p$, and
\begin{displaymath}
\JaK (f)(n) = \left\{ \begin{array}{ll}
m^{-\lambda} & \textrm{if $n=\gamma^a(m)$ for some $m\in \mathbb{N}$}\\
0 & \textrm{otherwise.}
\end{array}
 \right.
\end{displaymath}
Thus
\begin{equation*}
\norm{\JaK(f)}_{l^q( \zn{d} )}^q = \sum_{m\geq 1} m^{-\lambda q},
\end{equation*}
where the sum converges only if $1/q< \lambda$. Duality gives $1/p > 1-\lambda$.

For the necessity of the first condition, we take $f:\zn{d} \to \mathbb{C}$ by
\begin{displaymath}
f(n) = \left\{ \begin{array}{ll}
\prod_{j=1}^d |n_j|^{-\alpha} & \textrm{if $n_j \neq 0$ for all $1 \leq j \leq d$}\\
0 & \textrm{otherwise}
\end{array}
 \right.
\end{displaymath}
for a fixed constant $\alpha > 1/p$ so that  $f \in l^p (\zn{d})$.

For $n_1>1$ and $n_j \geq n_1^{a_j/a_1}$ for $2\leq j\leq d$, 
\begin{align*}
\JaK (f)(n) &\geq \sum_{1\leq m^{a_1} < n_1} m^{-\lambda} \prod_{j=1}^d (n_j-m^{a_j})^{-\alpha}  \gtrsim n_1^{(1-\lambda)/a_1} \prod_{j=1}^d n_j^{-\alpha}.
\end{align*}
Thus,
\begin{align*}
\norm{\JaK(f)}_{l^q( \zn{d} )}^q &\gtrsim  \sum_{n_1>1} n_1^{q(1-\lambda)/a_1 -q\alpha}  \sum_{\substack{n_1^{a_j} \leq n_j^{a_1} \leq 2n_1^{a_j} \\ 2\leq j\leq d}}  \prod_{j=2}^d n_j^{-q\alpha}\\ 
&\gtrsim \sum_{n_1>1}   n_1^{q(1-\lambda)/a_1 -q\alpha} \prod_{j=2}^d n_1^{a_j(1-q\alpha)/a_1} \\
&\gtrsim \sum_{n_1>1}  n_1^{q(1-\lambda)/a_1 + \norm{a}(1-q\alpha)/a_1-1} ,
\end{align*}
where the last sum converges only if $\frac{1}{q}  < \alpha - \frac{1}{\norm{a}}(1-\lambda)$. We get the first necessary condition since we may decrease $\alpha$ to $1/p$ as close as we want.

\subsection{Proof of (\ref{eqn:crude})} \label{sec:JaK}
Let $a=(a_1,\cdots,a_d)$ be given. Let $l=a_d-d$ and $\{b_i\}_{i=1}^l$ be the increasing sequence of natural numbers such that $\{ b_1,\cdots,b_l \} = ([1,a_d]\cap \mathbb{Z}) \setminus \{ a_1,\cdots,a_d\}$. Considering the underlying Diophantine systems, we have
\begin{equation} \label{eqn:VJ}
\begin{split}
\JaC &= \sum_{|h_{b_1}| \leq sP^{b_1}} \cdots \sum_{\abs{h_{b_{l}}} \leq sP^{b_l}} \int_{[0,1]^{a_d}} |S(\alpha)|^{2s} \prod_{i=1}^{l} e(-h_{b_i} \alpha_{b_i}) d\alpha \\
&\ll P^{\sum_{i=1}^{l} b_i} \mathbf{J}_{s,a_d}(P) =  P^{\frac{a_d(a_d+1)}{2}-\norm{a}} \mathbf{J}_{s,a_d}(P),
\end{split}
\end{equation}
where $S(\alpha) = \sum_{m=1}^P e(\alpha_1 m + \alpha_2 m^2+ \cdots + \alpha_{a_d} m^{a_d})$.

Combining (\ref{eqn:wooley}) and (\ref{eqn:VJ}), we have
\begin{equation} \label{eqn:Vino}
\JaC \lse P^{2s-\norm{a}+\epsilon}
\end{equation}
for $s\geq \tilde V(a_d)$.

\subsection{Fourier multiplier approach for $\JaK$} \label{sec:alt}
Let $\hat{f}(\alpha) = \sum_{n\in \zn{d}} f(n) e(-n\cdot \alpha)$ be the Fourier transform of $f \in l^1(\zn{d})$, where $e(t)\equiv e^{2\pi i t}$. We shall study the Fourier multiplier $m^a_\lambda$ of the operator $\JaK$ 
\begin{equation*}
m^a_\lambda(\alpha) = \sum_{n=1}^\infty \frac{e({-\gamma^a(n) \cdot \alpha})}{n^\lambda}
\end{equation*}
for $\alpha \in [0,1]^d$, given by the relation $\widehat{\JaK (f)}(\alpha) = m^a_{\lambda}(\alpha) \hat{f}(\alpha)$.

By closely following the argument in \cite{SW,P2}, we generalize the Weyl sum approach on the Fourier multipliers (Proposition 5 of \cite{P2}) as follows:
\begin{prop} \label{prop:weyl} Let $s\in \mathbb{N}$, $0<\delta\leq 2s$, and $\lambda\in \mathbb{C}$. $\JaC \lse P^{2s-\delta + \epsilon}$ for each $\epsilon>0$ if and only if $m^a_\lambda \in L^{2s}([0,1]^d)$ for all $\Re(\lambda)> 1-\frac{\delta}{2s}$.
\end{prop}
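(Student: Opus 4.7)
The plan is to exploit the orthogonality identity
\[
\int_{[0,1]^d} \Bigl| \sum_{n=M}^{N} n^{-\lambda} e(-\gamma^a(n) \cdot \alpha) \Bigr|^{2s} d\alpha
= \sum_{\substack{n_1,\ldots,n_{2s} \in [M,N] \\ \sum_{i=1}^s \gamma^a(n_i) = \sum_{i=s+1}^{2s} \gamma^a(n_i)}} \prod_{i=1}^{s} n_i^{-\lambda} \prod_{i=s+1}^{2s} n_i^{-\bar{\lambda}},
\]
obtained by expanding the $2s$-th power and integrating out characters on the torus. This realizes $L^{2s}$-norms of truncated weighted exponential sums as weighted counts of solutions to the Diophantine system behind $\JaC$, and both implications of the proposition amount to applying it in different ways.

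For the ``only if'' direction, I would decompose $m^a_\lambda = \sum_{j \geq 0} m^{a,j}_\lambda$ into dyadic frequency blocks with $m^{a,j}_\lambda(\alpha) = \sum_{2^j \leq n < 2^{j+1}} n^{-\lambda} e(-\gamma^a(n)\cdot \alpha)$, and apply the identity to each piece. Since $|n^{-\lambda}| \leq 2^{-j\Re(\lambda)}$ on $[2^j,2^{j+1})$ (note $\Re(\lambda) > 1 - \delta/(2s) \geq 0$) and the block solution count is dominated by $\JaC$ at $P = 2^{j+1}$, the hypothesis $\JaC \lse P^{2s-\delta+\epsilon}$ gives $\|m^{a,j}_\lambda\|_{L^{2s}}^{2s} \lse 2^{j(2s(1-\Re(\lambda)) - \delta + \epsilon)}$. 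For any fixed $\Re(\lambda) > 1 - \delta/(2s)$, choosing $\epsilon$ small makes the block norms geometrically decaying, and Minkowski's inequality sums them to yield $m^a_\lambda \in L^{2s}$.

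For the ``if'' direction, fix $\epsilon > 0$, take real $\lambda = 1 - \delta/(2s) + \epsilon/(2s)$ (which lies in the hypothesized range), and apply the identity to the partial sum $m^a_{\lambda,P}(\alpha) = \sum_{n=1}^P n^{-\lambda} e(-\gamma^a(n)\cdot\alpha)$. Each term of the resulting Diophantine sum is positive and bounded below by $P^{-2s\lambda}$, so
\[
P^{-2s\lambda}\JaC \leq \|m^a_{\lambda,P}\|_{L^{2s}}^{2s}.
\]
For real $\lambda$, the right-hand side is monotone increasing in $P$, and by a tail argument the partial sums are $L^{2s}$-Cauchy if and only if the total weighted solution sum $\sum_{\text{sol}}\prod n_i^{-\lambda}$ is finite---which, by the hypothesis $m^a_\lambda \in L^{2s}$ interpreted as $L^{2s}$-convergence of the defining series, is the case. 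Consequently $\|m^a_{\lambda,P}\|_{L^{2s}}^{2s}$ is uniformly bounded, and rearranging yields $\JaC \lse P^{2s\lambda} = P^{2s - \delta + \epsilon}$.

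The main technical point to pin down is the interpretation supporting the ``if'' direction, namely the identification of $\|m^a_\lambda\|_{L^{2s}}^{2s}$ with the total weighted solution count for real $\lambda$; this rests on non-negativity of the summands and a straightforward monotone/Cauchy argument. Once this interpretive bookkeeping is settled, the proof is a routine adaptation of Pierce's Proposition 5 in \cite{P2} from the scalar case $\gamma(n) = n^k$ to the coupled system $\gamma^a(n) = (n^{a_1},\ldots,n^{a_d})$.
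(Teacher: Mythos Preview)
Your argument is correct and genuinely different from the paper's. For the forward direction, the paper does not decompose dyadically; instead it invokes the Gamma-integral representation
\[
m^a_\lambda(\alpha) = \Gamma(\lambda/a_d) \int_0^1 S^a_y(\alpha)\, y^{\lambda/a_d}\,\frac{dy}{y} + O(1), \qquad S^a_y(\alpha) = \sum_{n\geq 1} e^{-n^{a_d}y} e(-\gamma^a(n)\cdot\alpha),
\]
applies Minkowski in $y$, and then uses Parseval together with summation by parts on the reformulated quantity $R^a_s(l_d)=\sum_{l'}(r^a_s(l',l_d))^2$ to estimate $\|S^a_y\|_{L^{2s}}$. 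For the converse, the paper reads off the Fourier coefficients $a_l$ of $(m^a_\lambda)^s\in L^2$ and bounds $a_l$ below by $r^a_s(l)\,l_d^{-s\lambda/a_d}$ to recover the Diophantine estimate. Your dyadic route is more elementary---it bypasses the special-function smoothing and the auxiliary $R^a_s$ entirely---while the paper's version sits more naturally within the circle-method framework of \cite{SW,P2} and makes the connection to the heat-smoothed exponential sum explicit. The interpretive caveat you flag in the ``if'' direction (identifying $\|m^a_\lambda\|_{L^{2s}}^{2s}$ with the total weighted solution count, or equivalently that the partial sums converge in $L^{2s}$ to $m^a_\lambda$) is present implicitly in the paper's proof as well, where the Fourier coefficients of $(m^a_\lambda)^s$ are asserted to be the $a_l$.
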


The ``folk" lemma (Lemma 2 of \cite{SW}) implies the following result. 
\begin{cor} \label{cor:Pi} Suppose that one has $\JaC \lse P^{2s-\delta + \epsilon}$ for some $0<\delta \leq 2s$. Then the operator $\JaK$ extends to a bounded operator from $l^{\frac{2s}{s+1}}(\zn{d})$ to $l^2(\zn{d})$ for $\Re(\lambda)> 1-\frac{\delta}{2s}$.
\end{cor}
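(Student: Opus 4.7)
The plan is to combine Proposition \ref{prop:weyl} with the ``folk'' lemma from \cite{SW}, which extracts $l^p \to l^2$ boundedness for a convolution operator out of mere $L^{2s}$-integrability of its Fourier symbol. First I would invoke Proposition \ref{prop:weyl} in the forward direction: the hypothesis $\JaC \lse P^{2s-\delta+\epsilon}$ guarantees that the multiplier $m^a_\lambda$ lies in $L^{2s}([0,1]^d)$ for every $\lambda$ with $\Re(\lambda) > 1 - \delta/(2s)$. This immediately reduces the corollary to verifying the folk lemma itself, namely that any multiplier $m \in L^{2s}([0,1]^d)$ induces a convolution operator $T_m$ (defined by $\widehat{T_m f} = m \hat f$) that is bounded from $l^{2s/(s+1)}(\zn{d})$ to $l^2(\zn{d})$.

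For that folk lemma, I would run a three-step estimate: Plancherel, then H\"older, then Hausdorff--Young. First, Plancherel gives $\|T_m f\|_{l^2} = \|m \hat f\|_{L^2}$. Next, H\"older with exponents $2s$ and $2s/(s-1)$, whose reciprocals sum to $1/2$, yields $\|m \hat f\|_{L^2} \leq \|m\|_{L^{2s}} \|\hat f\|_{L^{2s/(s-1)}}$. Finally, since $2s/(s-1)$ is the conjugate exponent of $2s/(s+1) \in [1,2]$, the Hausdorff--Young inequality on $\zn{d}$ gives $\|\hat f\|_{L^{2s/(s-1)}} \leq \|f\|_{l^{2s/(s+1)}}$, closing the chain. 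The degenerate case $s = 1$ is handled by replacing the middle step with the trivial bound $\|\hat f\|_{L^\infty} \leq \|f\|_{l^1}$.

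I do not anticipate any genuine obstacle here; every ingredient beyond Proposition \ref{prop:weyl} is classical, and the whole argument amounts to careful bookkeeping of exponents. The one arithmetic identity worth highlighting is that demanding $m \hat f \in L^2$ via H\"older against $m \in L^{2s}$ forces $\hat f \in L^{2s/(s-1)}$, whose Hausdorff--Young preimage is precisely $l^{2s/(s+1)}$, matching the exponent in the statement. The bound is uniform in $\Im(\lambda)$ at each fixed $\Re(\lambda) > 1 - \delta/(2s)$, since the constants from H\"older and Hausdorff--Young do not depend on $\lambda$, and only $\|m^a_\lambda\|_{L^{2s}}$ absorbs the $\lambda$-dependence through Proposition \ref{prop:weyl}.
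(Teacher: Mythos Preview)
Your proposal is correct and follows precisely the route the paper indicates: apply Proposition~\ref{prop:weyl} to place $m^a_\lambda$ in $L^{2s}([0,1]^d)$, then invoke the ``folk'' lemma (Lemma~2 of \cite{SW}), whose proof you have spelled out in full via Plancherel, H\"older, and Hausdorff--Young with the correct exponent bookkeeping. The paper itself gives no further detail beyond citing that lemma, so your write-up is if anything more explicit than the original.
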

Note that Theorem \ref{thm:JaK} follows from Corollary \ref{cor:Pi} and a complex interpolation.

\begin{proof}[Proof of Proposition \ref{prop:weyl}]
For $l\in \zn{d}$, let $r^a_s(l)$ be the number of solutions of the Diophantine system
\begin{equation} \label{eqn:en1} x_1^{a_j} + \cdots + x_s^{a_j} = l_j,
\end{equation}
where $l=(l',l_d)=(l_1, \cdots, l_d)$ and $x_i \geq 1$ for $1\leq j \leq d$. In addition, we define $r^a_s(l;P)$ to be the number of solutions of (\ref{eqn:en1}) for $1 \leq x_i \leq P$.

Let $\delta>0$ be given. We observe that $\JaC \lse P^{2s-\delta+\epsilon}$ is equivalent to 
\begin{equation} \label{eqn:en2}
\sum_{l_d=1}^P R^a_s(l_d) \lse P^{(2s-\delta+\epsilon)/a_d}, \text{ where } R^a_s(l_d) = \sum_{l' \in \zn{d-1}} \left( r^a_s (l',l_d)\right)^2.
\end{equation}
This follows from Parseval's identity applied to $\JaC = \int_{[0,1]^d} |S^{\gamma^a}(\alpha)|^{2s} d\alpha$, 
where $S^{\gamma^a}(\alpha) = \sum_{m=1}^{P} e(\gamma^a(m)\cdot \alpha).$
Indeed, one has
\begin{align*}
\JaC = \sum_{l\in \zn{d}} (r^a_s(l;P))^2 = \sum_{1\leq l_d \leq sP^{a_d}} \sum_{l'\in \zn{d-1}} (r^a_s(l',l_d))^2 = \sum_{1\leq l_d \leq sP^{a_d}} R^a_s(l_d).
\end{align*}

Therefore, it is enough to show that (\ref{eqn:en2}) is equivalent to 
\begin{equation} \label{eqn:mul}
m^a_\lambda \in L^{2s}([0,1]^{d}),  \text{ for every } \Re(\lambda)> 1-\frac{\delta}{2s}.
\end{equation}

As in \cite{SW,P2}, one has 
\begin{equation*}
m^a_\lambda (\alpha) = C_{a,\lambda} \int_0^1 S^a_y(\alpha) y^{\lambda/a_d}\frac{dy}{y} + O(1),
\end{equation*} 
where $C_{a,\lambda} = \Gamma(\lambda/a_d)$ and
$S^a_y(\alpha) = \sum_{n\geq 1} e^{-n^{a_d}y} e(-\gamma^a(n)\cdot \alpha)$ which is well-defined for each $y>0$.
This follows from the observation 
\begin{equation*}
\int_0^\infty e^{-n^{a_d}y} y^{\lambda/a_d}\frac{dy}{y} = n^{-\lambda} \Gamma(\lambda/a_d).
\end{equation*}

Thus, 
\begin{equation} \label{eqn:nmul}
\norm{m^a_\lambda}_{L^{2s}([0,1]^{d})} \leq C_{a,\lambda} \int_0^1 \norm{S^a_y}_{L^{2s}([0,1]^{d})} y^{\Re(\lambda)/a_d} \frac{dy}{y} + O(1).
\end{equation}
By Parseval's identity and summation by parts, (\ref{eqn:en2}) implies
\begin{equation} \label{eqn:nmul2}
 \norm{S^a_y}_{L^{2s}([0,1]^{d})}^{2s} = \sum_{l_d \geq 1} e^{-2l_dy} R^a_s(l_d) \lse y^{-(2s-\delta+\epsilon)/a_d}.
\end{equation}
Thus we get (\ref{eqn:mul}) by (\ref{eqn:nmul}) and (\ref{eqn:nmul2}).

For the converse, it is enough to assume (\ref{eqn:mul}) only for $\lambda\in \mathbb{R}$. Then $m^a_\lambda(\alpha)^s = \sum_{l\in \zn{d}} a_l e(-l\cdot \alpha) \in L^2$,
where 
\begin{equation*}
a_l = \sum_{n_1, \cdots, n_s \geq 1} \sum_{\gamma^a(n_1)+\cdots+\gamma^a(n_s)=l} \frac{1}{n_1^\lambda \cdots n_s^\lambda}.
\end{equation*}
Parseval's identity implies that $\sum_{l\in \zn{d}} |a_l|^2$ is finite. Since $n_i^{a_d} \leq l_d$ in the sum, we have $a_l \geq r^a_s(l) l_d^{-s\lambda/a_d}$ if $l_d \geq 1$. Thus,

\begin{align*}
\infty > \sum_{l\in \zn{d}} |a_l|^2 \geq \sum_{l_d\geq 1} \sum_{l'\in \zn{d-1}} (r^a_s(l))^2 l_d^{-2s\lambda/a_d} =\sum_{l_d\geq 1} R^a_s(l_d) l_d^{-2s\lambda/a_d}
\end{align*}
for every $\lambda> 1-\frac{\delta}{2s}$, which is equivalent to (\ref{eqn:en2}).
\end{proof}

\section*{Acknowledgments}
The author is grateful to a referee for the valuable comments, which improved the results of the paper. The author would like to thank Jong-Guk Bak for suggesting this research topic and his guidance, and Lillian Pierce for her encouragement. The author also would like to thank Jungho Park for helpful discussion on the circle method and Stephen Wainger for pointing out a misquotation. This paper grew out of the author's Master's thesis \cite{K}, and Theorem \ref{thm:JL} is contained in the thesis.
%\nocite{C1, D, Pt}
%\bibliographystyle{mrl}
%\bibliography{ref2}

\end{document}